\theoremstyle{plain}
\newtheorem{thm}{Theorem}[section]
\newtheorem{lem}[thm]{Lemma}
\newtheorem{prop}[thm]{Proposition}
\newtheorem{cor}[thm]{Corollary}
\theoremstyle{definition}
\newtheorem{defn}[thm]{Definition}
\newtheorem{rem}[thm]{Remark}
\newcommand{\Z}{\mathbb{Z}}
\newcommand{\N}{\mathbb{N}}
\newcommand{\calK}{\mathcal{K}}
\newcommand{\calB}{\mathcal{B}}
\newcommand{\calC}{\mathcal{C}}
\def\cprime{$'$}
\renewcommand\MR[1]{\href{http://www.ams.org/mathscinet-getitem?mr=#1}{MR#1}}
\begin{document}
\title{Measure-theoretic sensitivity via finite partitions}

\author[J. Li]{Jian Li}
\date{\today}
\address[J.~Li]{Department of Mathematics, Shantou University, Shantou, Guangdong 515063, P.R. China
-- \and -- Guangdong Provincial Key Laboratory of Digital Signal and Image
Processing Techniques, Shantou University, Shantou, Guangdong 515063, P.R. China}
\email{lijian09@mail.ustc.edu.cn}

\begin{abstract}
For every positive integer $n\geq 2$,
we introduce the concept of measure-theoretic $n$-sensitivity
for measure-theoretic dynamical systems via finite measurable partitions,
and show that an ergodic system
is measure-theoretically $n$-sensitive but not $(n+1)$-sensitive
if and only if its maximal pattern entropy is $\log n$.
\end{abstract}
\keywords{Measure-theoretic sensitivity, finite measurable partitions, maximal pattern entropy}
\subjclass[2010]{37A05, 37A35}
\maketitle

\section{Introduction}
The concept of \emph{sensitive dependence on initial conditions}
has attached lots of attention in recent years.
It captures the idea that a very small change in the initial condition can cause significantly different future behaviors.
Following the pioneer work by Guckenheimer  \cite{Guckenheimer1979},
Auslander and York \cite{Auslander1980} called that a continuous map $T\colon X\to X$ on the metric space $(X,d)$
is sensitive if there exists $\delta>0$ such that for any $x\in X$ and any $\varepsilon>0$
there is some $y\in X$ satisfying $d(x,y)<\varepsilon$ and $d(T^nx,T^ny)>\delta$
for some positive integer $n$.
See \cite{Glasner1993, Akin1996,Akin2003, Xiong2005, Shao2008, Ye2008, Li2013b, Li2015}
and references therein for the recent study of sensitivity in this line.

It is a natural question how to define some kind of sensitivity for measure-theoretic dynamical systems.
By a measure-theoretic dynamical system $(X,\calB,\mu,T)$,
we mean a Lebesgue space $(X,\calB,\mu)$ together with an invertible measure-preserving transformation
$T$ of $(X,\calB,\mu)$.
We shall also refer to $T$ or $(X,\mu,T)$ instead of $(X,\calB,\mu,T)$, for convenience.
There are serval attempts to study the sensitivity of measures via a suitable metric on the space
(see \cite{Abraham2002,Cadre2005,James2008,Zhang2008, Wu2009,Huang2011,Grigoriev2012,LiR2014,Garcia-Ramos2014}).
Recently, Morales \cite{Morales2013} studied measurable sensitivity via countable measurable partitions,
and Downarowicz and Lacroix  \cite{Downarowicz2014} studied
measure-theoretic chaos via refining sequence of finite measurable partitions.

In this paper, we propose a new approach to measure-theoretic sensitivity via finite measurable partitions.
Let $(X,\calB,\mu,T)$ be a measure-theoretic dynamical system and $n\geq 2$ be an integer.
We say that a finite measurable partition $\xi$ of $X$ is \emph{$n$-sensitive} if
there exists $\delta>0$ such that for every measurable set $A$ with positive measure,
there exist $n$ distinct points $x_1,x_2,\dotsc,x_n\in A$ and
a subset $F$ of $\Z_+$ with the lower density at least $\delta$
such that for every $k\in F$, $T^{k}x_1,T^{k}x_2,\dotsc,T^kx_n$ belong to different atoms of $\xi$.
The dynamical system $(X,\calB,\mu,T)$ is called
\emph{measure-theoretically $n$-sensitive} if
there exists an $n$-sensitive partition $\xi$ of $X$.
We will set up a relationship between the measure-theoretical sensitivity and maximal pattern entropy.

In 1958 Kolmogorov \cite{Kolmogorov1958} associated to
any measure-theoretic dynamical system  $(X,\calB,\mu,T)$
an isomorphism invariant, namely
the measure-theoretic entropy, $h_\mu(T)$.
Dynamical systems with positive entropy are random in certain sense,
and systems with zero entropy are said to be deterministic.
There are several ways to distinguish between deterministic systems.
One way to do this is to introduce the concept of sequence entropy.
In 1967 Kushnirenko \cite{Kusnirenko1967} studied the sequence entropy $h^\Gamma_\mu(T)$ of a
measure-theoretic dynamical system along an increasing sequence $\Gamma$ of non-negative integers, and
in particular he showed that an ergodic system is null,
that is $\sup_\Gamma h^\Gamma_\mu(T)=0$, if and only if it has a discrete spectrum.
Moreover, Pickel \cite{Pickel1969} and Walters (unpublished) showed that if $(X,\mu,T)$ is ergodic
then $\sup_\Gamma h^\Gamma_\mu(T)$ is $\infty$ or $\log n$ for some positive integer $n$.
Motivated by the maximal pattern complexity \cite{Kamae2002} studied by Kamae and Zamboni,
in 2009 Huang and Ye \cite{Huang2009} introduced the notion of maximal pattern entropy $h^*_\mu(T)$.
It is shown in \cite{Huang2009} that $h^*_\mu(T)=\sup_\Gamma h^\Gamma_\mu(T)$. 
Then for every ergodic system $(X,\mu,T)$, $h^*_\mu(T)$ is $\infty$ or $\log n$ for
some positive integer $n$.

The main result of this paper is  as follows.
\begin{thm} \label{thm:main-result}
If a measure-theoretic dynamical system $(X,\calB,\mu,T)$ is ergodic, then
\begin{enumerate}
  \item it is not measure-theoretically sensitive if and only if $h^*_\mu(T)=0$;
  \item it is  measure-theoretically $n$-sensitive but not $(n+1)$-sensitive
  for some integer $n\geq 2$ if and only if $h^*_\mu(T)=\log n$;
  \item it is measure-theoretically $n$-sensitive for every integer $n\geq 2$ if and only if $h^*_\mu(T)=\infty$.
\end{enumerate}
\end{thm}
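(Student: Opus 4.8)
The plan is to reduce all three cases to a single equivalence and then prove that equivalence. Since $(X,\calB,\mu,T)$ is ergodic, the Pickel--Walters theorem (as restated through Huang--Ye) guarantees that $h^*_\mu(T)$ takes only the values $0,\log 2,\log 3,\dots$ or $\infty$. Reading ``measure-theoretically sensitive'' as ``measure-theoretically $2$-sensitive,'' I would first record that each of (1)--(3) follows at once from the master equivalence
\[
(X,\mu,T)\ \text{is measure-theoretically}\ n\text{-sensitive}\quad\Longleftrightarrow\quad h^*_\mu(T)\ge\log n,\qquad n\ge 2.
\]
Granting this, (1) is the case $n=2$ together with the quantization; (2) combines ``$n$-sensitive'' ($h^*_\mu(T)\ge\log n$) with ``not $(n+1)$-sensitive'' ($h^*_\mu(T)<\log(n+1)$), which by discreteness of the value set forces $h^*_\mu(T)=\log n$; and (3) says $h^*_\mu(T)\ge\log n$ for every $n$, i.e. $h^*_\mu(T)=\infty$. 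So the entire proof rests on the displayed equivalence.

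For the implication $h^*_\mu(T)\ge\log n\Rightarrow n$-sensitive, I would extract from the hypothesis a partition carrying the right combinatorial independence. Concretely, I expect $h^*_\mu(T)\ge\log n$ to produce $n$ pairwise disjoint sets $A_1,\dots,A_n$ of positive measure and a set $\Lambda\subseteq\Z_+$ of positive density that is an \emph{independence set} for $(A_1,\dots,A_n)$: for every finite $E\subseteq\Lambda$ and every $\sigma\colon E\to\{1,\dots,n\}$ one has $\mu\big(\bigcap_{t\in E}T^{-t}A_{\sigma(t)}\big)>0$. (This is the measure-theoretic shadow of the fact that the maximal pattern complexity of $\xi=\{A_1,\dots,A_n,(\bigcup_iA_i)^{c}\}$ grows like $n^{|\Gamma|}$.) Taking this $\xi$ as the candidate sensitive partition, I would verify $n$-sensitivity as follows: given any $A$ with $\mu(A)>0$, a density-point / Rokhlin-tower argument combined with the independence along $\Lambda$ lets me locate $n$ distinct points $x_1,\dots,x_n\in A$ and a subset $F\subseteq\Z_+$ of lower density bounded below in terms of the $\mu(A_i)$ (hence uniformly in $A$) with $T^{k}x_i\in A_i$ for all $i$ and all $k\in F$; these points sit in distinct atoms of $\xi$ at every time of $F$.

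For the converse $n$-sensitive $\Rightarrow h^*_\mu(T)\ge\log n$, I would pass to the $n$-fold product and extract independence from separation. Assume $\xi$ is $n$-sensitive with constant $\delta$, let $D\subseteq X^{n}$ be the positive-measure set of $n$-tuples lying in $n$ distinct atoms, and put $T^{(n)}=T\times\cdots\times T$. Applying sensitivity to a neighbourhood basis of a $\mu$-density point and using a measurable selection, I would upgrade the existential statement (``for each positive-measure $A$ there are $n$ separated orbits'') to a measure-theoretic one: a set of $n$-tuples of positive $\mu^{n}$-measure whose $T^{(n)}$-orbit meets $D$ along a set of lower density at least $\delta$. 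A disintegration/marginal argument should then return $n$ atoms of $\xi$ and a set $\Gamma$ of lower density comparable to $\delta$ that is an independence set for them, so that $\bigvee_{t\in\Gamma'}T^{-t}\xi$ has at least $n^{|\Gamma'|}$ positive-measure atoms along finite $\Gamma'\subseteq\Gamma$; an equidistribution (Shannon--McMillan--Breiman-type) argument, available because the system is ergodic, then upgrades this count to the entropy bound $H_\mu\big(\bigvee_{t\in\Gamma'}T^{-t}\xi\big)\ge|\Gamma'|\log n-o(|\Gamma'|)$, whence $h^*_\mu(T,\xi)\ge\log n$.

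The main obstacle, in both directions, is the mismatch between the \emph{pointwise/existential} nature of the sensitivity definition---for each positive-measure set one must exhibit $n$ individual orbits separated along an \emph{infinite} density set $F$---and the \emph{averaged} nature of entropy. The delicate passage is from positive measure at each finite stage to a single $n$-tuple separated simultaneously at all times of a positive-density $F$, an infinite intersection that can a priori be null; this is exactly why a non-weakly-mixing ergodic system may fail to separate $n$ points even though each atom is visited with positive frequency, and it is precisely the independence guaranteed by $h^*_\mu(T)\ge\log n$ that rescues the forward construction. In the converse, the subtle point is the bridge between \emph{counting} patterns (which positive-density independence yields directly) and \emph{entropy} of the corresponding joins; I expect the Kamae--Zamboni complexity gap (or a Sauer--Shelah/VC-type dichotomy), together with the rigidity of the quantized value set, to be the tool that both controls the density of independence sets from an entropy ceiling and closes this gap in the ergodic setting.
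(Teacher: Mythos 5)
Your reduction of (1)--(3) to the single equivalence ``measure-theoretically $n$-sensitive $\Longleftrightarrow h^*_\mu(T)\geq\log n$'' is exactly the paper's skeleton (Propositions~\ref{prop:mpe-logn-sensitive} and \ref{prop:mpe-logn-not-sensitive} together with the Pickel--Walters quantization), but both of your arguments for that equivalence hinge on a claim that is not only unproved but false: that $h^*_\mu(T)\geq\log n$ (or, in the converse, $n$-sensitivity) produces disjoint positive-measure sets $A_1,\dotsc,A_n$ with an independence set $\Lambda\subseteq\Z_+$ of \emph{positive density}. What $p^*_{\mu,\xi}(k)=k\log n$ actually gives is much weaker: for each $k$ there is \emph{some} $k$-tuple of times realizing all $n^k$ patterns, and these tuples depend on $k$ and need not cohere into one infinite positive-density set; positive-density independence is an entropy-type phenomenon, not a pattern-entropy one. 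Concretely, let $T(z,m)=(z+\alpha,\, m+\phi(z)\bmod 2)$ on $Z\times\{0,1\}$, where $Z$ is the circle, $\phi=1_{[0,1/2)}$, and $\alpha$ is a suitable irrational (e.g.\ of bounded partial quotients, so that the extension is ergodic and not of discrete spectrum). This system has zero entropy and $h^*_\mu(T)=\log 2$ by Theorem~\ref{thm:structure}, and the fiber partition $\xi'=\{Z\times\{0\},Z\times\{1\}\}$ is $2$-sensitive by Proposition~\ref{prop:logn-sensitive}. Yet for any finite $E\subseteq\Z_+$, the cells of $\bigvee_{t\in E}T^{-t}\xi'$ are governed by the step functions $z\mapsto\sum_{i=0}^{t-1}\phi(z+i\alpha)\bmod 2$, whose discontinuities all lie in a set of at most $2\max E$ points; hence at most $4\max E$ patterns carry positive measure, while an independence set of density $\delta$ would require $2^{|E|}\geq 2^{\delta\max E/2}$ of them. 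So no positive-density independence set exists for $\xi'$: this kills your forward construction, and, since $\xi'$ is itself a sensitive partition here, it equally kills the independence set your converse claims to extract. Your converse has a second independent gap: having $n^{|\Gamma'|}$ cells of positive measure does not bound $H_\mu\bigl(\bigvee_{t\in\Gamma'}T^{-t}\xi\bigr)$ from below (the cells may have wildly unequal measures), and there is no Shannon--McMillan--Breiman theorem along arbitrary sequences to close that gap.

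It is worth seeing how the paper avoids both obstacles. In the forward direction (Proposition~\ref{prop:logn-sensitive}), given $A\in\calB^+$ one needs only a \emph{single} $n$-tuple of times $t_{s_1}<\dotsb<t_{s_n}$ such that (a) $\bigcap_i T^{-t_{s_i}}A$ has positive measure --- supplied by the Gillis intersection lemma (Lemma~\ref{lem:sets-in-PS}) --- and (b) the join of $\xi$ over these $n$ times has entropy close to $n\log n$, so that the one cell $W=\bigcap_i T^{-t_{s_i}}P_i$ has measure near $n^{-n}$. The positive-density separation set is then $F=N(z,W)$, the set of return times of a single point $z\in\bigcap_i T^{-t_{s_i}}A$ to $W$, provided by the Birkhoff ergodic theorem, and the sensitive points are $x_i=T^{t_{s_i}}z$; no infinite independence structure is ever invoked. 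In the converse direction (Proposition~\ref{prop:mpe-logn-not-sensitive}), the paper argues by contradiction through the structure theorem: if $h^*_\mu(T)=\log m$ with $m<n$, the Kronecker skew-product has $m$-point fibers, so by the pigeonhole principle an $n$-sensitive partition pushes down to a sensitive partition of the Kronecker factor, contradicting the fact that discrete-spectrum systems are never sensitive (Corollary~\ref{cor:discrete-spectrum-non-sensitive}, via Halmos--von Neumann and a rotation-invariant metric). You would need to replace both of your central steps by arguments of this kind; as written, the proposal does not close.
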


The organization of the paper is as follows.
In Section 2, after introducing some basic notions,
we show some properties of measure-theoretic sensitivity.
In Section 3, we prove the main result Theorem~\ref{thm:main-result}.
In the final section, we show an alternative definition of measure-theoretic sensitivity,
that is for ergodic systems we can replace the lower density by upper density in the definition
of sensitive partition.

\section{Measure-theoretic sensitivity}

Denote by $\Z_+$ (resp. $\N$) the set of all non-negative integers (resp. the set of all positive integers).
For a subset $F$ of $\Z_+$, we define the \emph{upper density} $\overline{D}(F)$ of $F$ by
\[ \overline{D}(F)=\limsup_{N\to\infty} \frac{\#(F\cap[0,N-1])}{N},\]
where $\#(\cdot)$ is the number of elements of a set.
Similarly, the \emph{lower density} $\underline{D}(F)$ of $F$ is defined by
\[ \underline{D}(F)=\liminf_{N\to\infty} \frac{\#(F\cap[0,N-1])}{N}.\]
One may say $F$ that has \emph{density} $D(F)$ if $\overline{D}(F)=\underline{D}(F)$,
in which case $D(F)$ is equal to this common value.

By a \emph{measure-theoretic dynamical system}, we mean a quadruple $(X,\calB,\mu,T)$,
where $(X,\calB,\mu)$ is a Lebesgue space and
$T\colon X\to X$ is an invertible measure-preserving transformation.
A partition of $X$ is a collection of disjoint measurable subsets of $X$ whose union is $X$.
Note that we assume that atoms in a partition are measurable,
and we only consider finite partitions in most places.
Let $\xi$ and $\eta$ be two partitions.
We say that $\eta$ \emph{refines} $\xi$ or $\eta$ is a \emph{refinement} of $\xi$, denoted by $\xi\preccurlyeq \eta$,
if each atom of $\xi$ is a union of atoms in $\eta$.
The \emph{join} of two finite partitions  $\xi=\{P_1,P_2,\dotsc,P_k\}$ and $\eta=\{Q_1,Q_2,\dotsc,Q_m\}$,
denoted by $\xi\vee\eta$, is the partition into all sets of the form $P_i\cap Q_j$.
For simplicity, when the measure $\mu$ is clear, denote
\[\calB^+=\{B\in\calB\colon \mu(B)>0\}.\]

Now we can give the precise definition of measure-theoretic sensitivity.
\begin{defn}
Let $(X,\calB,\mu,T)$ be a measure-theoretic dynamical system  and $n\geq 2$ be an integer.
We say that a finite partition $\xi$ of $X$ is \emph{$n$-sensitive} if
there exists $\delta>0$ such that for every $A\in\calB^+$
there exist $n$ distinct points $x_1,x_2,\dotsc,x_n\in A$ and
a subset $F$ of $\Z_+$ with $\underline{D}(F)>\delta$
such that for every $k\in F$, $T^{k}x_1,T^{k}x_2,\dotsc,T^kx_n$ belong to different atoms of $\xi$.
The positive number $\delta$ is called an \emph{$n$-sensitive constant} for the partition $\xi$.

The dynamical system $(X,\calB,\mu,T)$ is called \emph{measure-theoretically $n$-sensitive} if
there exists an $n$-sensitive partition $\xi$ of $X$.
For the case $n=2$, we briefly say sensitivity instead of $2$-sensitivity.
\end{defn}


\begin{rem} Assume that a partition $\xi$ of $X$ is $n$-sensitive. Then we have the following facts:
\begin{enumerate}
  \item $\xi$ contains at least $n$ atoms;
  \item the measure $\mu$ is nonatomic;
  \item every  partition which refines $\xi$ is also $n$-sensitive;
  \item for every partition $\eta$, the partition $\eta\vee\xi$ refines $\xi$
  and is $n$-sensitive.
\end{enumerate}
\end{rem}

We first show that the notion of measure-theoretic sensitivity is invariant under isomorphisms.

\begin{prop}\label{prop:invariant}
If two measure-theoretic dynamical systems $(X,\calB,\mu,T)$ and $(Y,\calC,\nu,S)$ are isomorphic,
then $(X,\calB,\mu,T)$ is measure-theoretically $n$-sensitive  for some $n\geq 2$
if and only if so is $(Y,\calC,\mu,S)$.
\end{prop}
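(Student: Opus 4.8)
The plan is to transport a sensitive partition across the isomorphism. Since the statement is symmetric in the two systems, it suffices to prove one implication, say that $n$-sensitivity of $(X,\calB,\mu,T)$ forces $n$-sensitivity of $(Y,\calC,\nu,S)$; the reverse implication then follows by applying the same argument to the inverse isomorphism. First I would unwind the meaning of isomorphism. An isomorphism of Lebesgue-space dynamical systems is only specified modulo null sets, so I would begin by fixing co-null subsets $X_0\in\calB$ and $Y_0\in\calC$, invariant under $T$ and $S$ respectively, together with a bi-measurable measure-preserving bijection $\phi\colon X_0\to Y_0$ satisfying $\phi\circ T=S\circ\phi$ at every point of $X_0$. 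This genuine pointwise conjugacy on a full-measure invariant set is precisely what makes the transport argument legitimate.

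Next, given an $n$-sensitive partition $\xi=\{P_1,\dotsc,P_k\}$ of $X$ with $n$-sensitive constant $\delta$, I would form the pushforward partition $\eta=\{\phi(P_1\cap X_0),\dotsc,\phi(P_k\cap X_0)\}$ of $Y_0$, completed to a partition of $Y$ by absorbing the null set $Y\setminus Y_0$ into one atom. Because $\phi$ is a measure-preserving bijection, $\eta$ is a finite measurable partition of $Y$, and $\phi$ sets up a bijective correspondence between the atoms of $\xi$ restricted to $X_0$ and the atoms of $\eta$; in particular $\phi$ carries distinct atoms of $\xi$ to distinct atoms of $\eta$.

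Then I would check that $\eta$ is $n$-sensitive with the same constant $\delta$. Given $B\in\calC^+$, set $A=\phi^{-1}(B\cap Y_0)$; since $\phi$ is measure preserving, $\mu(A)=\nu(B\cap Y_0)=\nu(B)>0$, so $A\in\calB^+$. Applying the $n$-sensitivity of $\xi$ to $A$ produces $n$ distinct points $x_1,\dotsc,x_n\in A\subseteq X_0$ and a set $F\subseteq\Z_+$ with $\underline{D}(F)>\delta$ such that for every $k\in F$ the points $T^kx_1,\dotsc,T^kx_n$ lie in distinct atoms of $\xi$. Putting $y_i=\phi(x_i)\in B$, the intertwining relation gives $S^ky_i=S^k\phi(x_i)=\phi(T^kx_i)$, so for each $k\in F$ the points $S^ky_1,\dotsc,S^ky_n$ lie in distinct atoms of $\eta$. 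Injectivity of $\phi$ guarantees that the $y_i$ are distinct, and the unchanged set $F$ still satisfies $\underline{D}(F)>\delta$. Hence $\eta$ witnesses the $n$-sensitivity of $(Y,\calC,\nu,S)$ with constant $\delta$.

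The main obstacle is the bookkeeping forced by the almost-everywhere nature of the isomorphism: I must ensure that the points produced by sensitivity actually lie in the good invariant set $X_0$, where $\phi$ is a true conjugacy, so that the identity $S^k\phi(x_i)=\phi(T^kx_i)$ holds for \emph{all} $k$ rather than merely almost everywhere. This is exactly why I arrange for $A$ to sit inside $X_0$ at the outset, using that $X_0$ is $T$-invariant and co-null; once this is in place, the remainder is a routine transport of points and atoms across $\phi$.
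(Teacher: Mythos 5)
Your proposal is correct and follows essentially the same route as the paper: fix a pointwise conjugacy between co-null invariant subsets, push the sensitive partition forward (handling the complementary null set), pull a positive-measure set back to the source system, and transport the $n$ sensitive points and the time set $F$ across the conjugacy. Your explicit attention to why the points lie in the invariant full-measure set, so that the intertwining identity holds for all iterates, is exactly the point the paper also makes, so nothing further is needed.
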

\begin{proof}
As $(X,\calB,\mu,T)$ and $(Y,\calC,\nu,S)$ are isomorphic,
there exist $X'\in\calB$ and $Y'\in\calC$ with $\mu(X')=\nu(Y')=1$ such that
$T(X')\subset X'$, $S(Y')\subset Y'$ and
there is an invertible measure-preserving transformation $\pi\colon X'\to Y'$
with $\pi(T(x))=S(\pi(x))$ for any $x\in X'$.

Suppose that $(X,\calB,\mu,T)$ is measure-theoretically $n$-sensitive.
Let $\xi$ be an $n$-sensitive partition of $X$ and
$\xi'$ be the restriction of $\xi$ to $X'$.
Let $\eta'$ be the partition of $Y'$ obtained as the image of $\pi$ of $\xi'$
and $\eta$ be the partition of $Y$ consisting of the elements
of $\eta'$ and the null set $Y_0=Y\setminus Y'$.
We want to show that $\eta$ is an $n$-sensitive partition of $Y$.

Let $\delta>0$ be an $n$-sensitive constant with respect to $\xi$.
Fix a measurable set $C\in\calC^+$. Let $A=\pi^{-1}(C)\cap X'$.
Then $\mu(A)>0$. By the definition of $n$-sensitivity,
There exist $n$ distinct points $x_1,x_2,\dotsc,x_n\in A$ and $F\subset\Z_+$
with $\underline{D}(F)\geq \delta$
such that for every $k\in F$, $T^{k}x_1,T^{k}x_2,\dotsc,T^kx_n$ belong to different atoms of $\xi$.
Note that the orbits of $x_1, x_2,\dotsc, x_n$  are contained in $X'$, then
$T^{k}x_1,T^{k}x_2,\dotsc,T^kx_n$ belong to different atoms of $\xi'$.
Let $y_1=\pi(x_1) , y_2=\pi(x_2),\dotsc,y_n=\pi(x_n)$.
Then $y_1,y_2,\dotsc,y_n\in C$ and
for every $k\in F$, $S^{k}y_1,S^{k}y_2,\dotsc,S^ky_n$ belong to different atoms of $\eta'$,
and then also to different atoms of $\eta$.
Thus $\eta$ is $n$-sensitive, which ends the proof.
\end{proof}

\begin{rem}
By Proposition~\ref{prop:invariant}, we know that if two partitions $\xi$ and $\eta$ of $X$
are equal (mod $0$) then $\xi$ is $n$-sensitive if and only if so is $\eta$.
\end{rem}

The following result reveals that if a finite partition is sensitive then
for almost every pair in the product space
the orbits of those two points belong to different atoms
of the partition along a time set with positive lower density.

\begin{prop}\label{prop:pairs-sensitvity}
Let $(X,\calB,\mu,T)$ be a measure-theoretic dynamical system
and $\xi$ be a finite partition of $X$.
Then  $\xi$ is a sensitive partition if and only if
there exists $\delta>0$ such that for $\mu\times\mu$-almost every pair $(x,y)\in X\times X$
there exists a subset $F$ of $\Z_+$ with $\underline{D}(F)>\delta$
such that for every $n\in F$, $T^{n}x$ and $T^{n}y$ belong to different atoms of $\xi$.
\end{prop}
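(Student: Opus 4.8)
My plan is to transport the whole problem into the product system $(X\times X,\mu\times\mu,T\times T)$. Writing $\xi=\{P_1,\dots,P_k\}$, set $W=(X\times X)\setminus\bigcup_i(P_i\times P_i)$, so that two points lie in different atoms of $\xi$ at time $n$ exactly when $(T\times T)^n(x,y)\in W$. Then the separation set of a pair is the return-time set $N(x,y)=\{n\in\Z_+:(T\times T)^n(x,y)\in W\}$, and since a single shift does not change lower density, the function $F(x,y):=\underline{D}(N(x,y))$ is genuinely $(T\times T)$-invariant. Because $\underline{D}$ is monotone in $N(x,y)$, both sides of the proposition become statements about $F$: the partition $\xi$ is sensitive iff there is $\delta_0>0$ such that every $A\in\calB^+$ contains distinct points $x_1,x_2$ with $F(x_1,x_2)>\delta_0$ (equivalently, $\{F>\delta_0\}$ meets every off-diagonal square $A\times A$), while the target conclusion is that $F(x,y)>\delta$ for $\mu\times\mu$-almost every pair, for some $\delta>0$; that is, $\operatorname{ess\,inf} F>0$.

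Two structural inputs drive the argument. First, applying Birkhoff's theorem to $\mathbf{1}_W$ under $T\times T$ shows that for $\mu\times\mu$-almost every $(x,y)$ the density of $N(x,y)$ exists and equals $f:=\mathbb{E}[\mathbf{1}_W\mid\mathcal{I}]$, the conditional expectation on the $(T\times T)$-invariant $\sigma$-algebra; hence $F=f$ almost everywhere. Second, using that $F$ is invariant and $T$ preserves $\mu$, the slice quantity $\mu(\{y:F(x,y)>\delta\})$ is a $T$-invariant function of $x$, hence constant $\mu$-a.e. by ergodicity; call it $m(\delta)$, so $\mu\times\mu(\{F>\delta\})=m(\delta)$. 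The backward implication is then immediate: if the almost-everywhere condition holds with constant $\delta$, the good set $\{F>\delta\}$ has full measure and is disjoint from the diagonal (where $N$ is empty), so the diagonal is null; intersecting with any $A\times A$, $A\in\calB^+$, leaves a positive-measure, hence nonempty, off-diagonal set, which supplies the required distinct pair.

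For the forward implication the task reduces, via the constants $m(\delta)$, to proving $\operatorname{ess\,inf} F>0$, i.e. $m(\delta)=1$ for some $\delta>0$. When $T$ is weakly mixing this is free: $T\times T$ is ergodic, $f\equiv\mu\times\mu(W)=1-\sum_i\mu(P_i)^2$, and a genuinely sensitive partition cannot have an atom of full measure (otherwise $A=X$ minus the null set of points visiting the other atoms with positive upper density would carry no good pair), so this constant is positive. The decisive feature of sensitivity in the general case is that $\{F>\delta_0\}$ meets $A\times A$ for \emph{every} $A\in\calB^+$, including sets of arbitrarily small measure; thus there are honest pairs with $F>\delta_0$ lying in arbitrarily small squares, i.e. arbitrarily close to the diagonal. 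This is precisely the behaviour that equicontinuous (discrete-spectrum) systems forbid, since for an irrational rotation $F$ tends to $0$ as the points approach one another, and it is what should force $f$ to be bounded below.

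The main obstacle is to convert this into $\operatorname{ess\,inf} F>0$. The natural route is contradiction: if $\{F\le\delta\}$ had positive measure for every $\delta\in(0,\delta_0)$, I would try to manufacture a single $A\in\calB^+$ all of whose distinct pairs satisfy $F\le\delta_0$, contradicting sensitivity. Here two tensions collide. First, sensitivity delivers \emph{honest} pairs while the ergodic-theoretic control of $F$ is only almost-everywhere, so one must not let the witnessing pairs hide in a null set; the remedy, already used for the full-atom case above, is to arrange $A$ to avoid a suitable null set, and I expect this to be the way honest and almost-every separation are reconciled. Second, and more seriously, a $(T\times T)$-invariant set of positive measure need not contain any product square at all — the invariant relation $\{x-y\in S\}$ for an irrational rotation shows this — so the hypothetical bad set cannot be squared off abstractly. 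Overcoming this is the crux, and I expect it to require the full strength of the ``for every $A\in\calB^+$'' clause together with the structure of the part of the system responsible for small values of $f$, namely its maximal equicontinuous (compact group rotation) factor: on that factor unique ergodicity makes every point generic, collapsing the gap between honest and almost-every separation and letting small-$f$ pairs be organized into a genuine square $A\times A$ over a small set in the factor.
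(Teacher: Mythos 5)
Your setup is sound as far as it goes: the product set $W$, the $(T\times T)$-invariant function $F(x,y)=\underline{D}(N(x,y))$, the identification (via Birkhoff) of $F$ with $\mathbb{E}[\mathbf{1}_W\mid\mathcal{I}]$ almost everywhere, and the sufficiency argument are all correct. But the necessity direction --- the substantive half of the proposition --- is not proved. You say so yourself: converting ``$\{F>\delta_0\}$ meets every square $A\times A$ with $A\in\calB^+$'' into ``$\{F>\delta\}$ has full measure'' is flagged as an unresolved crux, and the route you sketch (a contradiction argument that would require squaring off the invariant set $\{F\le\delta\}$, then an appeal to the maximal equicontinuous factor) is both undeveloped and out of scope. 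Note in particular that the proposition carries no ergodicity hypothesis, so your appeals to ergodicity --- the constancy of $m(\delta)$, the Kronecker factor, genericity in a uniquely ergodic group-rotation model --- are not available; the paper's own proof uses only Birkhoff's theorem (which needs no ergodicity) and Fubini.

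The idea you are missing is an elementary pigeonhole that bypasses the ``invariant sets need not contain squares'' obstruction entirely, by comparing pairs inside $A$ against one \emph{fixed} generic point rather than trying to organize the bad set. Let $\lambda$ be a sensitive constant and $\delta=\lambda/2$. By Birkhoff applied to $\mathbf{1}_W$ there is a full-measure set $Z\subset X\times X$ on which the density of the separation set exists, and by Fubini a full-measure set $Z_0\subset X$ of points $x$ with $\mu(Z(x))=1$. Fix $x\in Z_0$, and let $W_\delta(x)$ be the (measurable) set of $y$ whose separation set from $x$ has lower density at least $\delta$; it suffices to show $W_\delta(x)$ meets every $A\in\calB^+$, for then $\mu(W_\delta(x))=1$ and Fubini finishes. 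Given $A$, replace it by $A\cap Z(x)\in\calB^+$ and apply sensitivity inside it: there are distinct $y_1,y_2\in A\cap Z(x)$ and $F\subset\Z_+$ with $\underline{D}(F)\ge\lambda$ along which $T^ky_1,T^ky_2$ lie in different atoms. For each $k\in F$, since $T^ky_1$ and $T^ky_2$ occupy different atoms, at least one of them occupies a different atom from $T^kx$; splitting $F=F_1\cup F_2$ accordingly, subadditivity of \emph{upper} density gives some $a$ with $\overline{D}(F_a)\ge\lambda/2$. Because $(x,y_a)\in Z$, the density of its separation set exists, so this upper-density bound upgrades to a genuine density bound $\ge\delta$, i.e.\ $y_a\in W_\delta(x)\cap A$. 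This is exactly how honest pairs and almost-every pairs get reconciled --- the same tension you identified --- and it requires none of the structure theory you were reaching for.
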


\begin{proof}
The sufficiency is obvious. Now we prove the necessity.
Enumerate the partition $\xi$ as $\{P_1,P_2,\dotsc,P_r\}$.
For every $\tau>0$, let $W_\tau$ denote the collection of pairs $(x,y)\in X^2$ satisfying
that there exists
a subset $F$ of $\Z_+$ with $\underline{D}(F)\geq \tau$
such that for every $k\in F$, $T^{k}x ,T^{k}y$ belong to different atoms of $\xi$.
It is not hard to check that
\[W_\tau=\bigcap_{t=1}^\infty\bigcup_{M=1}^\infty\bigcap_{N=M}^\infty
\bigcup_{K\in\calK_\tau(t,N)}\bigcap_{k\in K}\bigcup_{1\leq i\neq j\leq r}T^{-k}P_i\times T^{-k}P_j,\]
where $\calK_\tau(t,N)=\{K\subset\{0,1,\dotsc,N-1\}\colon |K|\geq N(\tau-\tfrac{1}{t})\}$.
Then $W_\tau$ is measurable.
Let $\lambda$ be a sensitive constant for $\xi$ and put $\delta=\lambda/2$.
We want to show that $\mu\times \mu(W_\delta)=1$.

Fix a point $x\in X$. For every $k\in\Z_+$ there exists a unique atom,
named $P_{i_k}$, in $\xi$ containing $T^kx$.
Let $W_\delta(x)=\{y\in X\colon (x,y)\in W_\delta\}$.
We can express $W_\delta(x)$ as
\[W_\delta(x)=\bigcap_{t=1}^\infty\bigcup_{M=1}^\infty\bigcap_{N=M}^\infty
\bigcup_{K\in\calK_\tau(t,N)}\bigcap_{k\in K}T^{-k}\biggl(\bigcup_{i\neq i_k} P_i\biggr).\]
So $W_\delta(x)$ is also measurable.
It is sufficient to show that $W_\delta(x)\cap A\neq\emptyset$ for every $A\in \calB^+$.
Fix $A\in\calB^+$. There exist two distinct points $y_1,y_2\in A$ and
a subset $F$ of $\Z_+$ with $\underline{D}(F)\geq\lambda$
such that for every $k\in F$, $T^{k}y_1, T^k y_2$ belong to different atoms of $\xi$.
So for every $k\in F$, there exists $a_k\in\{1,2\}$ such that
$T^kx, T^k y_{a_k}$ belong to different atoms of $\xi$.
Let $F_1=\{k\in F\colon a_k=1\}$ and $F_2=\{k\in F\colon a_k=2\}$.
Clearly, $F=F_1\cup F_2$.
There exists $a\in\{1,2\}$ such that $\underline{D}(F_a)\geq\lambda/2=\delta$.
Then $y_a\in W_\delta(x)\cap A$, which ends the proof.
\end{proof}

\smallskip
\noindent\textbf{Remark.}
In last two lines of the proof of Proposition~\ref{prop:pairs-sensitvity},
we have  $\underline{D}(F)\geq\lambda$ and $F=F_1\cup F_2$,
then we only have there exists $a\in\{1,2\}$ such that $\overline{D}(F_a)\geq\lambda/2=\delta$.
To get the desired result, we need a few more words.
Applying the Birkhoff ergodic theorem to the indicator function
of the set $\bigcup_{1\leq i\neq j\leq r} P_i\times P_j$, we get that 
$\mu\times\mu$-a.e. point $(z_1,z_2)\in X^2$ satisfies that
the density of the set of non-negative integers $k$ 
such that  $T^{k}z_1, T^k  z_2$ belong to different atoms of $\xi$ 
exists.
Denote by the collection of those points  $Z\subset X^2$.
By the Fubini theorem, there exists $Z_0\subset X$ with $\mu(Z_0)=1$
and for every $x\in Z_0$, $\mu(Z(x))=\mu(\{y\in X\colon (x,y)\in Z\})=1$.
So in the the proof of Proposition~\ref{prop:pairs-sensitvity},
we should fix $x\in Z_0$ and fix $A\in \mathcal{B}^+$ with $A\subset Z(x)$.
Now we know $\overline{D}(F_a)\geq \delta$.
As $(x,y_a)\in Z$,  
we know that the density of the set of non-negative integers $k$ 
such that  $T^{k}x, T^k  y_a$ belong to different atoms of $\xi$ 
exists and then is at least $\delta$. 
Hence, $y_a\in W_\delta(x)\cap A$.
\smallskip

A measure-theoretic dynamical system $(X,\calB,\mu,T)$ is called \emph{ergodic} if
the only members $B$ of $\calB$ with $T^{-1}B=B$ satisfy $\mu(B)=0$ or $\mu(B)=1$.
For a point $x\in X$ and a measurable set $U$, put
\[N(x,U)=\{i\in\Z_+\colon T^ix\in U\}.\]
If $(X,\calB,\mu,T)$ is ergodic, by the well-known Birkhoff ergodic theorem
(e.g. see \cite{Walters1982}), for every $U\in\calB$ we have
the density of $N(x,U)$ exists and equals to $\mu(U)$ for $\mu$-a.e.\ $x\in X$.
For convenience, we will say that all those points satisfy
the assertion of the ergodic theorem with regard to $U$.

If the Lebesgue space $(X,\calB,\mu)$ has some ``good'' metric,
then we can use the metric instead of partitions to characterize
measure-theoretic sensitivity.
\begin{thm}\label{thm:metic-n-sensitive}
Let $(X,\calB,\mu,T)$ be an ergodic system.
Suppose that there exists a metric $d(\cdot,\cdot)$ on $X$ such that
 $\calB$ is the completion of the Borel $\sigma$-algebra of $(X,d)$ under $\mu$.
If the ergodic system $(X,\calB,\mu,T)$
is measure-theoretically $n$-sensitive for some $n\geq 2$,
then there exists $\varepsilon>0$ such that
for every  $A\in\calB^+$
there exist $n$ distinct points $x_1,x_2,\dotsc,x_n\in A$ such that
\[\liminf_{N\to\infty}\frac{1}{N}\sum_{k=0}^{N-1}\min_{1\leq i<j\leq n} d(T^kx_i,T^kx_j)>\varepsilon.\]
Furthermore, if $(X,d)$ is compact, the converse is also true.
\end{thm}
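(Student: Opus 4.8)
The plan is to treat the two implications separately; in each the only real content is to pass between the combinatorial statement ``the orbits lie in different atoms'' and the metric statement ``the orbits are pairwise far apart.'' For the forward direction I would start from an $n$-sensitive partition and replace its atoms by positively separated cores, then use the Birkhoff ergodic theorem to control the time spent outside the cores; for the converse I would use compactness to manufacture a partition into tiny atoms and read off sensitivity from the Ces\`aro hypothesis.

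\textbf{Forward direction.} Let $\xi=\{P_1,\dots,P_r\}$ be an $n$-sensitive partition with constant $\delta$. The first and main step is to build positively separated cores. Since $\mu$ is a finite Borel measure on a metric space it is inner regular by closed sets, so I can choose closed $C_i\subset P_i$ with $\sum_i\mu(P_i\setminus C_i)$ as small as I wish; the $C_i$ are then pairwise disjoint closed sets. For $\rho>0$ put $Q_i=\{x\in C_i\colon d(x,\bigcup_{j\neq i}C_j)\geq\rho\}$. As each point of $C_i$ lies in the open complement of the closed set $\bigcup_{j\neq i}C_j$ and hence has positive distance to it, $Q_i\uparrow C_i$ as $\rho\downarrow0$; thus for $\rho$ small the remainder $R=X\setminus\bigcup_iQ_i$ has measure $\beta:=\mu(R)$ arbitrarily small, while by construction $d(Q_i,Q_j)\geq\rho$ for $i\neq j$. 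Now I would apply the sensitivity definition not to $A$ but to $A\cap G$, where $G$ is the full-measure set of points generic for $R$ in the sense of the ergodic theorem; this still lies in $\calB^+$ and produces $n$ distinct points $x_1,\dots,x_n$ and a set $F$ with $\underline{D}(F)>\delta$ along which the $T^kx_i$ occupy distinct atoms of $\xi$. Deleting the times $\bigcup_iN(x_i,R)$ — each of upper density exactly $\beta$ by genericity — yields $F'$ with $\underline{D}(F')\geq\delta-n\beta$ on which each $T^kx_i$ sits in its core, so $\min_{i<j}d(T^kx_i,T^kx_j)\geq\rho$. Since the summand is nonnegative, $\liminf_N\frac1N\sum_{k<N}\min_{i<j}d(T^kx_i,T^kx_j)\geq\rho(\delta-n\beta)$, and choosing $\beta<\delta/n$ and $\varepsilon<\rho(\delta-n\beta)$ closes this direction.

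\textbf{Converse direction (compact case).} Fix $0<\eta<\varepsilon$ and use compactness to cover $X$ by finitely many balls of radius $\eta/2$, from which I form a finite Borel partition $\xi$ whose atoms all have diameter $<\eta$; I claim it is $n$-sensitive. Given $A\in\calB^+$, the hypothesis supplies $n$ distinct points $x_1,\dots,x_n\in A$ with $\liminf_N\frac1N\sum_{k<N}m_k>\varepsilon$, where $m_k=\min_{i<j}d(T^kx_i,T^kx_j)$. With $D=\operatorname{diam}(X)<\infty$ and $F=\{k\colon m_k>\eta\}$, the elementary estimate $\frac1N\sum_{k<N}m_k\leq\eta+D\cdot\#(F\cap[0,N))/N$ gives $\underline{D}(F)\geq(\varepsilon-\eta)/D>0$; setting $\delta=(\varepsilon-\eta)/(2D)$ we get $\underline{D}(F)>\delta$. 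For $k\in F$ the points $T^kx_1,\dots,T^kx_n$ are pairwise at distance $>\eta$, hence, since every atom has diameter $<\eta$, lie in pairwise distinct atoms of $\xi$. Thus $\xi$ is $n$-sensitive with constant $\delta$.

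\textbf{Main obstacle.} I expect the delicate point to be the first step of the forward direction: producing cores $Q_i$ that are simultaneously of nearly full measure in their atoms and pairwise at a uniform positive distance, \emph{without} assuming $X$ compact. This rests on inner regularity by closed sets together with the fact that a point of one member of a finite disjoint family of closed sets has positive distance to the union of the others; everything afterwards is bookkeeping with densities, where one must respect $\liminf(a_N-b_N)\geq\liminf a_N-\limsup b_N$ and invoke genericity so that the sets $N(x_i,R)$ have honest density $\beta$ rather than merely small upper density.
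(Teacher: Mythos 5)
Your proof is correct, and its overall skeleton matches the paper's: inner regularity produces closed sets nearly filling the atoms, the Birkhoff ergodic theorem (applied before fixing $A$, so that $\varepsilon$ is uniform) controls the time the orbits spend in the small residual set, the density bookkeeping $\underline{D}(F')\geq\underline{D}(F)-n\beta$ gives the Ces\`aro bound, and your converse via a finite partition into atoms of diameter at most $\eta$ is essentially the paper's argument made explicit by the estimate $\frac1N\sum_{k<N}m_k\leq\eta+D\cdot\#(F\cap[0,N))/N$. The one genuine difference is the separation step, and there your version is the more careful one. The paper takes closed sets $F_{P,m}\subset P$ and simply declares $s_m$ to be ``the positive minimal distance between points in different sets $F_{P,m}$, $F_{P',m}$''; but disjoint closed sets in a non-compact metric space can be at distance zero (a curve and its asymptote, or $\N$ and $\{n+\frac1n\colon n\geq 2\}$ in $\mathbb{R}$), so as written that step needs either compact approximating sets (i.e.\ tightness of $\mu$) or a repair. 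Your $\rho$-cores $Q_i=\{x\in C_i\colon d(x,\bigcup_{j\neq i}C_j)\geq\rho\}$ supply exactly that repair: each point of $C_i$ has positive distance to the closed set $\bigcup_{j\neq i}C_j$, so $Q_i$ exhausts $C_i$ as $\rho\downarrow 0$, and the uniform separation $d(Q_i,Q_j)\geq\rho$ holds by construction, at the harmless cost of discarding a little more measure. So you buy validity of the forward direction for an arbitrary compatible metric, where the paper's proof implicitly leans on compactness. Two cosmetic points: atoms carved from balls of radius $\eta/2$ have diameter $\leq\eta$ rather than $<\eta$, which is all you actually use (distance $>\eta$ still forces distinct atoms); and note that the definition of sensitivity gives $\underline{D}(F)>\delta$ strictly, which is what makes your final inequality strict, as the theorem requires.
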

\begin{proof}
Let $\xi$ be an $n$-sensitive partition of $X$
and $\delta>0$ be an $n$-sensitive constant with respect to $\xi$.
By regularity of the measure, each atom $P$ of $\xi$ can be approximated in measure
by a sequence of its closed subsets  $(F_{P,m})_{m\geq 1}$ such that $\mu(P\setminus F_{P,m})<\frac{1}{m}\mu(P)$.
We let
\[U_{m}=X\setminus \bigcup_{P\in\xi_k} F_{P,m}.\]
Then $\mu(U_{m})<\frac{1}{m}$ for every $m\geq 1$.
Also let $s_{m}$ denote the positive minimal distance between points in different sets $F_{P,m}$,
$F_{P',m}$ with $P,P'\in\xi$.

Let $X_0$ be the collection of points in $X$ which, for at least one of sets $U_{m}$,
does not satisfy the assertion of the ergodic theorem. As $(X,\mu,T)$ is ergodic, $\mu(X_0)=0$.
For every  $A\in\calB^+$, there exist $n$ distinct points $x_1,x_2,\dotsc,x_n\in A\setminus X_0$
and $F\subset \Z_+$ with $\underline{D}(F)\geq\delta$
such that for every $k\in F$, $T^kx_1,T^kx_2,\dotsc,T^kx_n$ belongs to different atoms of $\xi$.

Choose a positive number $\lambda$ such that $\delta-n\lambda>0$
and a positive integer $m_0$ such that $\frac{1}{m_0}<\lambda$.
Then $\mu(U_{m_0})<\lambda$.
Let $F_i=N(x_i,U_{m_0})$ for $i=1,2,\dotsc,n$.
We have $D(F_i)=\mu(U_{m_0})<\lambda$ for $i=1,2,\dotsc,n$.
Let $F'=F\setminus (F_1\cup F_2\cup\dotsb\cup F_n)$.
Then
\[\underline{D}(F')\geq \underline{D}(F)-\sum_{i=1}^nD(F_i)\geq\delta-n\lambda>0\]
and for every $k\in F'$,
$T^kx_1,T^kx_2,\dotsc,T^kx_n$ belong to different sets $F_{P,m_0}$,
$F_{P',m_0}$ with $P,P'\in\xi$.
So for every $k\in F'$, $\min_{1\leq i<j\leq n}d(T^kx_i,T^kx_j)\geq s_{m_0}$.
Put $\varepsilon=\frac{1}{2}(\delta-n\lambda)s_{m_0}$. Then
\[\liminf_{N\to\infty}\frac{1}{N}\sum_{k=0}^{N-1}\min_{1\leq i<j\leq n}d(T^kx_i,T^kx_j)
\geq s_{m_0}\liminf_{N\to\infty}\frac{1}{N} \#(F'\cap[0,N-1])>\varepsilon.\]

Now assume that $(X,d)$ is compact. We first have the following observation: if
\[\liminf_{N\to\infty}\frac{1}{N}\sum_{k=0}^{N-1}\min_{1\leq i<j\leq n}d(T^kx_i,T^kx_j)>\varepsilon,\]
then there exists $\delta>0$ (depending only on $\varepsilon$ and the distance of $X$)
such that there exists a subset $F$ of $\Z_+$ with $\underline{D}(F)\geq\delta$ such that
$\min_{1\leq i<j\leq n}d(T^kx_i,T^kx_j)>\delta$.
Since $(X,d)$ is compact,
there exists a finite partition $\xi$ of $X$ such that the diameter of
the largest atom in $\xi$ is less than $\delta$.
Note that if the distance of two points is greater than $\delta$ then they belong to different
atoms of $\xi$.
By the above observation,  we have that $\xi$ is an $n$-sensitive partition.
\end{proof}

Using Proposition~\ref{prop:pairs-sensitvity} and
repeating the proof of Theorem~\ref{thm:metic-n-sensitive},
we have the following result.
\begin{thm}\label{thm:top-model-pairs-sensitivity}
Let $(X,\calB,\mu,T)$ be an ergodic system.
Suppose that there exists a metric $d(\cdot,\cdot)$ on $X$ such that $\calB$ is the completion of the Borel $\sigma$-algebra of $(X,d)$ under $\mu$.
If  the ergodic system $(X,\calB,\mu,T)$
is measure-theoretically sensitive,
then there exists $\varepsilon>0$ such that
for $\mu\times\mu$-almost every pair $(x,y)\in X\times X$,
\[\liminf_{N\to\infty}\frac{1}{N}\sum_{k=0}^{N-1} d(T^kx,T^ky)>\varepsilon.\]
\end{thm}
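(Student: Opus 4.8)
The plan is to run the argument of Theorem~\ref{thm:metic-n-sensitive} in the special case $n=2$, but to feed in the almost-everywhere pairwise statement supplied by Proposition~\ref{prop:pairs-sensitvity} rather than the one-set-at-a-time statement coming from the definition of sensitivity. So I would first fix a sensitive partition $\xi$ of $X$ and apply Proposition~\ref{prop:pairs-sensitvity} to produce a constant $\delta>0$ and a full-measure set $W\subset X\times X$ such that for every $(x,y)\in W$ there is a set $F=F(x,y)\subset\Z_+$ with $\underline{D}(F)>\delta$ along which $T^kx$ and $T^ky$ lie in different atoms of $\xi$. This converts the sensitivity hypothesis into exactly the kind of input that the metric estimate in Theorem~\ref{thm:metic-n-sensitive} consumes.

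Next I would import the approximation machinery from the proof of Theorem~\ref{thm:metic-n-sensitive}: by regularity, pick closed sets $F_{P,m}\subset P$ for each atom $P\in\xi$ with $\mu(P\setminus F_{P,m})<\frac1m\mu(P)$, set $U_m=X\setminus\bigcup_{P\in\xi}F_{P,m}$ so that $\mu(U_m)<\frac1m$, and let $s_m>0$ be the minimal distance between distinct closed pieces $F_{P,m}$, $F_{P',m}$. Let $X_0$ be the set of points failing the ergodic theorem for some $U_m$; by ergodicity $\mu(X_0)=0$. The estimate then runs as follows: choose $\lambda>0$ with $\delta-2\lambda>0$ and $m_0$ with $\frac1{m_0}<\lambda$, so $\mu(U_{m_0})<\lambda$. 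For a pair $(x,y)\in W$ with $x,y\notin X_0$, the sets $F_1=N(x,U_{m_0})$ and $F_2=N(y,U_{m_0})$ both have density $\mu(U_{m_0})<\lambda$, so $F'=F\setminus(F_1\cup F_2)$ satisfies $\underline{D}(F')\geq\delta-2\lambda>0$, and for every $k\in F'$ the points $T^kx,T^ky$ lie in distinct closed pieces, whence $d(T^kx,T^ky)\geq s_{m_0}$. Discarding all terms outside $F'$ gives
\[\liminf_{N\to\infty}\frac1N\sum_{k=0}^{N-1}d(T^kx,T^ky)\geq s_{m_0}\,\underline{D}(F')\geq\varepsilon\]
with $\varepsilon=\tfrac12(\delta-2\lambda)s_{m_0}>0$, which is the desired conclusion.

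The main thing to watch---and the only genuine obstacle in an otherwise verbatim repetition---is that the estimate must hold for $\mu\times\mu$-almost every pair, not merely for one pair at a time. Here I would observe that the three conditions ``$(x,y)\in W$'', ``$x\notin X_0$'', and ``$y\notin X_0$'' each cut out a full-measure subset of $X\times X$ (the latter two because $X_0\times X$ and $X\times X_0$ are $\mu\times\mu$-null by Fubini), so their intersection is a full-measure set on which the argument above applies without change. The accompanying bookkeeping point is the density subtraction $\underline{D}(F\setminus(F_1\cup F_2))\geq\underline{D}(F)-\overline{D}(F_1)-\overline{D}(F_2)$, which is valid precisely because $F_1,F_2$ have densities (as $x,y\notin X_0$), so their upper densities equal $\mu(U_{m_0})<\lambda$. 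This is the same subtlety flagged in the remark following Proposition~\ref{prop:pairs-sensitvity}, and it is resolved in the same way, by insisting that both orbits obey the ergodic theorem with respect to $U_{m_0}$.
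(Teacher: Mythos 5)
Your proposal is correct and coincides with the paper's own proof, which is stated in one line as ``using Proposition~\ref{prop:pairs-sensitvity} and repeating the proof of Theorem~\ref{thm:metic-n-sensitive}'': you feed the almost-every-pair conclusion of Proposition~\ref{prop:pairs-sensitvity} into the closed-set approximation and density-subtraction estimate from Theorem~\ref{thm:metic-n-sensitive} with $n=2$, exactly as intended. Your added bookkeeping (the Fubini observation that $X_0\times X$ and $X\times X_0$ are $\mu\times\mu$-null, and requiring both orbits to obey the ergodic theorem for $U_{m_0}$ so that the upper densities of $F_1,F_2$ are controlled) is precisely the detail the paper leaves implicit, and it is handled correctly.
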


\begin{cor}\label{cor:discrete-spectrum-non-sensitive}
If an ergodic  system $(X,\calB,\mu,T)$ has a discrete spectrum,
then it is not measure-theoretically sensitive.
\end{cor}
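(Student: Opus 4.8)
The plan is to reduce the problem to a concrete metric model and then apply Theorem~\ref{thm:top-model-pairs-sensitivity}. By the Halmos--von Neumann theorem, an ergodic system with discrete spectrum is isomorphic to a rotation on a compact metrizable abelian group. That is, there exist a compact metrizable abelian group $G$ with normalized Haar measure $m$ and an element $a\in G$ such that $(X,\calB,\mu,T)$ is isomorphic to $(G,\mathcal{B}_G,m,R_a)$, where $R_a(g)=ag$ and $\mathcal{B}_G$ is the completion of the Borel $\sigma$-algebra of $G$ under $m$. Since measure-theoretic sensitivity is an isomorphism invariant by Proposition~\ref{prop:invariant}, it suffices to show that the rotation $(G,\mathcal{B}_G,m,R_a)$ is not measure-theoretically sensitive.

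Next I would fix a translation-invariant metric $d$ on $G$ compatible with its topology; such a metric exists because $G$ is a compact metrizable abelian group. With this choice the hypotheses of Theorem~\ref{thm:top-model-pairs-sensitivity} hold: $G$ is compact and $\mathcal{B}_G$ is the completion of the Borel $\sigma$-algebra of $(G,d)$ under $m$. The crucial observation is that the rotation acts by isometries: for every $k\in\Z_+$ and all $g,h\in G$, translation invariance of $d$ gives $d(R_a^k g,R_a^k h)=d(a^k g,a^k h)=d(g,h)$. Hence the Birkhoff averages are constant in time, and for every pair $(g,h)$ one has $\liminf_{N\to\infty}\frac1N\sum_{k=0}^{N-1}d(R_a^k g,R_a^k h)=d(g,h)$.

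I would then argue by contradiction. Suppose the rotation were measure-theoretically sensitive. By Theorem~\ref{thm:top-model-pairs-sensitivity} there would exist $\varepsilon>0$ with $d(g,h)>\varepsilon$ for $m\times m$-almost every pair $(g,h)$. However, the closed ball $V=\{t\in G\colon d(e,t)\le\varepsilon\}$ about the identity $e$ has nonempty interior, and therefore $m(V)>0$. Writing $E=\{(g,h)\in G\times G\colon d(g,h)\le\varepsilon\}$ and using translation invariance of $d$, we have $E=\{(g,h)\colon g^{-1}h\in V\}$, so by Fubini's theorem and left-invariance of $m$, $(m\times m)(E)=\int_G m(gV)\,dm(g)=m(V)>0$. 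This contradicts $d(g,h)>\varepsilon$ almost everywhere, so the rotation, and hence the original system, is not measure-theoretically sensitive.

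The main obstacle here is conceptual rather than computational: one must recognize that discrete spectrum is best handled through its group-rotation model, in which a translation-invariant metric renders the dynamics isometric and so forces the orbit distances, and thus the Birkhoff averages appearing in Theorem~\ref{thm:top-model-pairs-sensitivity}, to be constant along the orbit. Once this model is in place, the two remaining ingredients---the existence of the invariant metric and the positivity of $m(V)$---are standard.
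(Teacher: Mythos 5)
Your proof is correct and takes essentially the same route as the paper: reduce to an ergodic group rotation via the Halmos--von Neumann theorem, equip the group with a rotation-invariant metric so that the rotation is an isometry and the Birkhoff averages of distances are constant, then contradict the metric characterization of measure-theoretic sensitivity. The only (minor) difference is that you invoke Theorem~\ref{thm:top-model-pairs-sensitivity} (the almost-every-pair version) together with a Fubini argument to exhibit a positive-measure set of close pairs, whereas the paper applies Theorem~\ref{thm:metic-n-sensitive} directly, where it suffices to take as $A$ a ball of radius less than $\varepsilon/2$, which has positive Haar measure.
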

\begin{proof}
By the Halmos-von Neumann theorem (see Theorem 3.6 in \cite{Walters1982}),
there exists a compact meterizable abelian group $G$ with normalised Haar measure $m$
and an ergodic rotation $R_a\colon G\to G$, $g\mapsto ag$ such that
$(X, \mu,T)$ and $(G,m,R_a)$ are isomorphic.
On the group $G$ there is a metric $\rho$ which is rotation invariant in the sense
that $\rho(gx,gy)=\rho(x,y)$ for any $g,x,y\in G$.
By Theorem \ref{thm:metic-n-sensitive}, $(G,m,R_a)$ is not measure-theoretically sensitive
and then so is $(X,\calB,\mu,T)$.
\end{proof}

By a topological dynamical system, we mean a pair $(X,T)$
where $X$ is a compact metric space and $T\colon X\to\ X$ is a homeomorphism.
The Borel $\sigma$-algebra of $X$ is denoted by $\calB(X)$.
A  probability measure $\mu$ on $(X,\calB(X)$ is called \emph{$T$-invariant}
if the quadruple $(X,\calB(X),\mu,T)$ forms a  measure-theoretic dynamical system.
By the well known Krylov-Bogolioubov theorem, any topological dynamical system
always admits some invariant measure.
A topological dynamical system $(X,T)$ is called \emph{uniquely ergodic} if
there is only one invariant measure.

Let $(X,\calB,\mu,T)$ be a measure-theoretic dynamical system.
We say that a topological dynamical system $(\widehat{X},\widehat{T})$ is
a \emph{topological model} for  $(X,\calB,\mu,T)$  if  there exists an invariant measure
$\widehat{\mu}$ of $(\widehat{X},\widehat{T})$
such that the systems $(X,\calB,\mu,T)$
and $(\widehat{X},\calB(\widehat{X}),\widehat{\mu},\widehat{T})$ are measure theoretically isomorphic.
The well-known Jewett-Krieger's theorem \cite{Krieger1972}
states that every ergodic system has a uniquely ergodic topological model.

Inspired by Theorem \ref{thm:metic-n-sensitive}, we can define the sensitivity of invariant measures for
topological dynamical systems.

\begin{defn}
Let $(X,T)$ be a topological dynamical system and $n\geq 2$.
We say that an ergodic invariant measure $\mu$ of $(X,T)$ is \emph{mean $n$-sensitive}
if there exists $\varepsilon>0$ such that
for every  $A\in\calB^+$
there exist $n$ distinct points $x_1,x_2,\dotsc,x_n\in A$ such that
\[\liminf_{N\to\infty}\frac{1}{N}\sum_{k=0}^{N-1}\min_{1\leq i<j\leq n} d(T^kx_i,T^kx_j)>\varepsilon.\]
\end{defn}

Following the proof of Theorem \ref{thm:metic-n-sensitive},
we have the following characterization of measure-theoretical sensitivity.
\begin{prop}
Let $(X,\calB,\mu,T)$ be an ergodic system and $n\geq 2$.
Then the following conditions are equivalent:
\begin{enumerate}
  \item $(X,\calB,\mu,T)$ is measure-theoretically $n$-sensitive;
  \item for every topological model $(\widehat{X},\widehat{\mu},\widehat{T})$
  of $(X,\calB,\mu,T)$, $\widehat{\mu}$ is mean $n$-sensitive;
  \item there exists a topological model $(\widehat{X},\widehat{\mu},\widehat{T})$
  of $(X,\calB,\mu,T)$ such that $\widehat{\mu}$ is mean $n$-sensitive.
\end{enumerate}
\end{prop}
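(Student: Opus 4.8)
The plan is to establish the cycle of implications $(1)\Rightarrow(2)\Rightarrow(3)\Rightarrow(1)$, relying on three ingredients already in hand: the two directions of Theorem~\ref{thm:metic-n-sensitive}, the isomorphism invariance from Proposition~\ref{prop:invariant}, and the Jewett--Krieger theorem, which guarantees that an ergodic system admits a (uniquely ergodic) topological model. The observation driving everything is that the $\liminf$ inequality appearing in the conclusion of Theorem~\ref{thm:metic-n-sensitive} is \emph{verbatim} the defining inequality of mean $n$-sensitivity, so the proposition is essentially a repackaging of that theorem once we move sensitivity across the isomorphism defining a topological model.

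First I would prove $(1)\Rightarrow(2)$. Fix an arbitrary topological model $(\widehat{X},\widehat{\mu},\widehat{T})$, so that by definition $(X,\calB,\mu,T)$ and $(\widehat{X},\calB(\widehat{X}),\widehat{\mu},\widehat{T})$ are isomorphic. Since measure-theoretic $n$-sensitivity is preserved under isomorphism by Proposition~\ref{prop:invariant}, and ergodicity is likewise an isomorphism invariant, the model is an ergodic, measure-theoretically $n$-sensitive system carried by a compact metric space whose Borel $\sigma$-algebra (completed under $\widehat{\mu}$) is exactly the $\sigma$-algebra at hand. The forward direction of Theorem~\ref{thm:metic-n-sensitive} then applies directly and produces precisely the inequality defining mean $n$-sensitivity of $\widehat{\mu}$. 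The implication $(2)\Rightarrow(3)$ is then immediate: the Jewett--Krieger theorem supplies at least one topological model, so the universal statement in $(2)$ specializes to the existential statement in $(3)$.

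For $(3)\Rightarrow(1)$ I would run the previous argument in reverse. Given a topological model $(\widehat{X},\widehat{\mu},\widehat{T})$ with $\widehat{\mu}$ mean $n$-sensitive, the underlying space is compact metric, so the converse (the ``furthermore'') direction of Theorem~\ref{thm:metic-n-sensitive} converts the mean $n$-sensitivity inequality into measure-theoretic $n$-sensitivity of $(\widehat{X},\calB(\widehat{X}),\widehat{\mu},\widehat{T})$. Transporting this back through the isomorphism via Proposition~\ref{prop:invariant} then yields $n$-sensitivity of $(X,\calB,\mu,T)$, closing the cycle.

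The only genuine point to watch---more a bookkeeping matter than a serious obstacle---is the mismatch between the Borel $\sigma$-algebra $\calB(\widehat{X})$ named in the definition of a topological model and the \emph{completed} $\sigma$-algebra demanded by the hypotheses of Theorem~\ref{thm:metic-n-sensitive}. I would dispose of this by recording that neither notion is affected by passing to the $\widehat{\mu}$-completion: every set of positive measure contains, and agrees mod $0$ with, Borel sets, and by the remark following Proposition~\ref{prop:invariant} the sensitivity of a partition depends only on its class mod $0$. Hence the completion can be inserted or removed freely, and all three conditions refer to the same systems up to this harmless identification. Beyond this, the argument is a direct invocation of the cited results, so no new estimates are required.
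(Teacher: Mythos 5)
Your proposal is correct and follows essentially the same route the paper intends: the paper gives no separate proof, stating only that the proposition follows the proof of Theorem~\ref{thm:metic-n-sensitive}, and your cycle $(1)\Rightarrow(2)\Rightarrow(3)\Rightarrow(1)$ is exactly the combination of that theorem's two directions with Proposition~\ref{prop:invariant} and Jewett--Krieger. Your explicit handling of the Borel-versus-completed $\sigma$-algebra point is a sound treatment of a detail the paper leaves implicit.
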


\section{Maximal pattern entropy and  measure-theoretic sensitivity}

The main aim of this section is to proof the main result (Theorem~\ref{thm:main-result}) of this paper.
First we need some preparations.

Let $(X,\calB,\mu)$ be a probability space.
The entropy of a finite partition $\xi=\{P_1,P_2,\dotsc,P_k\}$ of $X$ is
\[H_\mu(\xi)= -\sum_{i=1}^k\mu(P_i)\log\mu(P_i),\]
where $0\log 0$ is defined to be $0$.
We need the following two elementary results, see Theorems 4.1 and 4.3 of \cite{Walters1982} for example.

\begin{lem}\label{lem:partition-k}
If $\xi$ is a partition with $k$ atoms and $k\geq 2$, then
\[H_\mu(\xi)\leq \log k,\]
with equality if and only if $\mu(P)=\frac{1}{k}$ for each atom $P$ of $\xi$.
\end{lem}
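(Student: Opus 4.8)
The final statement is Lemma~\ref{lem:partition-k}, the standard bound that a $k$-atom partition has entropy at most $\log k$, with equality exactly for the uniform distribution. Let me sketch how I would prove this.

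---

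The plan is to reduce the statement to the concavity (strict concavity) of the function $\phi(t) = -t\log t$ on $[0,1]$ and then apply Jensen's inequality. First I would set $p_i = \mu(P_i)$ for $i = 1, 2, \dotsc, k$, so that $p_i \geq 0$ and $\sum_{i=1}^k p_i = 1$, and rewrite the claim purely as a statement about probability vectors: that
\[
H_\mu(\xi) = \sum_{i=1}^k \phi(p_i) \leq \log k,
\]
where $\phi(t) = -t\log t$. The key analytic input is that $\phi$ is strictly concave on $[0,1]$, which follows from computing $\phi''(t) = -1/t < 0$ on $(0,1]$ (with the convention $\phi(0)=0$ handling the boundary).

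The main step is to apply Jensen's inequality to the concave function $\phi$ with equal weights $1/k$. Concretely, I would write
\[
\frac{1}{k}\sum_{i=1}^k \phi(p_i) \leq \phi\Bigl(\frac{1}{k}\sum_{i=1}^k p_i\Bigr) = \phi\Bigl(\frac{1}{k}\Bigr) = -\frac{1}{k}\log\frac{1}{k} = \frac{\log k}{k}.
\]
Multiplying through by $k$ gives $H_\mu(\xi) \leq \log k$, which is the desired bound. For the equality clause, I would invoke the strict concavity of $\phi$: Jensen's inequality is an equality if and only if all the arguments $p_i$ are equal, and since they sum to $1$ and there are $k$ of them, this forces $p_i = 1/k$ for each $i$, i.e.\ $\mu(P) = \frac{1}{k}$ for every atom $P$. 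Conversely, substituting $p_i = 1/k$ directly yields $H_\mu(\xi) = \log k$, so the characterization is complete.

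I do not expect any serious obstacle here, since this is a classical result; the only point requiring minor care is the equality case, where one must be precise that it is the \emph{strict} concavity of $\phi$ (rather than mere concavity) that pins down the uniform distribution as the unique maximizer. Since the excerpt explicitly cites Theorem~4.1 of \cite{Walters1982} as the source, the cleanest route in the paper would simply be to reference that result; but if a self-contained argument is wanted, the Jensen-based computation above is the shortest honest proof.
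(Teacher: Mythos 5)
Your proof is correct: the paper does not prove Lemma~\ref{lem:partition-k} itself but simply cites it as an elementary result (Theorem 4.1 of \cite{Walters1982}), and the Jensen/strict-concavity argument you give for $\phi(t)=-t\log t$, including the careful handling of the equality case, is precisely the standard proof found in that reference. Nothing further is needed.
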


\begin{lem}\label{lem:join-partition}
Let $\xi$ and $\eta$ be two finite partitions of $(X,\calB,\mu)$. Then
$H_\mu(\xi\vee\eta)\leq H_\mu(\xi)+H_\mu(\eta)$.
\end{lem}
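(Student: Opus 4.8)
The plan is to derive this subadditivity from the concavity of the elementary real function $\phi(x)=-x\log x$ on $[0,1]$, with the convention $\phi(0)=0$. First I would fix enumerations $\xi=\{P_1,\dotsc,P_k\}$ and $\eta=\{Q_1,\dotsc,Q_m\}$ and abbreviate $a_{ij}=\mu(P_i\cap Q_j)$, $p_i=\mu(P_i)$, and $q_j=\mu(Q_j)$, so that $p_i=\sum_j a_{ij}$, $q_j=\sum_i a_{ij}$, and, since $\xi\vee\eta=\{P_i\cap Q_j\}$, we have $H_\mu(\xi\vee\eta)=\sum_{i,j}\phi(a_{ij})$.

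The key step would be to peel off the contribution of $\eta$. For each index $j$ with $q_j>0$ I would factor $a_{ij}=q_j\cdot(a_{ij}/q_j)$ inside the logarithm and expand, which after collecting terms yields the identity
\[H_\mu(\xi\vee\eta)=H_\mu(\eta)+\sum_{j:\,q_j>0} q_j\Bigl(-\sum_{i}\tfrac{a_{ij}}{q_j}\log\tfrac{a_{ij}}{q_j}\Bigr).\]
It then remains only to bound the second summand by $H_\mu(\xi)$. Since $\sum_{j:\,q_j>0} q_j=1$ and, for each fixed $i$, $\sum_{j:\,q_j>0} q_j\cdot(a_{ij}/q_j)=\sum_j a_{ij}=p_i$, Jensen's inequality applied to the concave function $\phi$ gives
\[\sum_{j:\,q_j>0} q_j\,\phi\Bigl(\tfrac{a_{ij}}{q_j}\Bigr)\leq \phi(p_i)=-p_i\log p_i.\]
Summing over $i$ shows that the second summand above is at most $H_\mu(\xi)$, and combining this with the identity gives $H_\mu(\xi\vee\eta)\leq H_\mu(\eta)+H_\mu(\xi)$, as desired.

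I do not expect a genuine obstacle, since the result is classical; the only care needed is bookkeeping around degenerate atoms. Indices $j$ with $q_j=0$ force $a_{ij}=0$ and contribute nothing to any of the sums (using $\phi(0)=0$), so restricting to $j$ with $q_j>0$ is harmless; and the convention $0\log0=0$ must be applied consistently when $a_{ij}=0$ while $q_j>0$. The substantive content lies entirely in the concavity of $\phi$, equivalently in the inequality $\log t\leq t-1$. As an alternative that avoids the conditional rewriting, one could instead verify directly that $H_\mu(\xi)+H_\mu(\eta)-H_\mu(\xi\vee\eta)=\sum_{i,j}a_{ij}\log\frac{a_{ij}}{p_iq_j}\geq 0$ by Gibbs' inequality, but the Jensen argument keeps the proof self-contained with the tools already at hand.
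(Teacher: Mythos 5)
Your proof is correct. The paper does not prove this lemma itself---it cites it as an elementary fact (Theorem 4.3 in Walters' \emph{An Introduction to Ergodic Theory}), and your argument is precisely the classical proof given there: your identity is the conditional-entropy decomposition $H_\mu(\xi\vee\eta)=H_\mu(\eta)+H_\mu(\xi\mid\eta)$, and your Jensen step is exactly the standard bound $H_\mu(\xi\mid\eta)\leq H_\mu(\xi)$ coming from concavity of $\phi(x)=-x\log x$. Your handling of the degenerate atoms ($q_j=0$ forcing $a_{ij}=0$) is also correct, so there is nothing to fix.
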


According to Lemma~\ref{lem:partition-k}, it is easy to verify the following result.
\begin{lem}\label{lem:partition-near-k}
Let $\xi$ be a partition with $k$ atoms and $k\geq 2$.
For every $\varepsilon>0$ there exists $\lambda=\lambda(k,\varepsilon)>0$
such that if $H_\mu(\xi)>\log k-\lambda$
then $|\mu(P)-\frac{1}{k}|<\varepsilon$ for each atom $P$ of $\xi$.
\end{lem}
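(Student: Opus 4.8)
The plan is to reduce the statement to a compactness argument on the probability simplex and to invoke the equality case of Lemma~\ref{lem:partition-k}. Since $H_\mu(\xi)$ depends only on the vector of measures $(\mu(P_1),\dotsc,\mu(P_k))$ of the atoms of $\xi$, it suffices to work with the entropy function $H(p_1,\dotsc,p_k)=-\sum_{i=1}^k p_i\log p_i$ defined on the standard simplex $\Delta_k=\{(p_1,\dotsc,p_k)\colon p_i\geq 0,\ \sum_{i=1}^k p_i=1\}$. With the convention $0\log 0=0$, the function $H$ is continuous on the compact set $\Delta_k$.

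Fix $\varepsilon>0$ and consider $K_\varepsilon=\{p\in\Delta_k\colon \max_{1\leq i\leq k}|p_i-\tfrac1k|\geq\varepsilon\}$, which is precisely the set of probability vectors for which the desired conclusion fails. It is closed, hence compact. If $K_\varepsilon$ is empty (which happens when $\varepsilon>1-\tfrac1k$), the conclusion holds vacuously and any $\lambda>0$ works, so I may assume $K_\varepsilon\neq\emptyset$. On this compact set the continuous function $H$ attains a maximum $M=\max_{p\in K_\varepsilon}H(p)$. By Lemma~\ref{lem:partition-k}, the only point of $\Delta_k$ at which $H$ equals $\log k$ is the uniform vector $(\tfrac1k,\dotsc,\tfrac1k)$; since this point does not belong to $K_\varepsilon$, we obtain $M<\log k$.

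Now set $\lambda=\lambda(k,\varepsilon)=\log k-M>0$. If a partition $\xi$ with $k$ atoms satisfies $H_\mu(\xi)>\log k-\lambda=M$, then its probability vector $p=(\mu(P_1),\dotsc,\mu(P_k))$ has $H(p)>M$, so $p\notin K_\varepsilon$ by the definition of $M$. This means exactly that $|\mu(P)-\tfrac1k|<\varepsilon$ for every atom $P$ of $\xi$, as required.

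I do not expect any real obstacle here: the only points to keep in mind are that $H$ extends continuously to the boundary faces of the simplex where some $p_i$ vanish, so that $\max_{p\in K_\varepsilon}H(p)$ is genuinely attained, and that the strict inequality $M<\log k$ is exactly the content of the uniqueness of the maximizer in Lemma~\ref{lem:partition-k}. One could equally argue by contradiction, extracting from a sequence of partitions with $H_\mu(\xi_m)>\log k-\tfrac1m$ that violate the conclusion a convergent subsequence of probability vectors whose limit both maximizes $H$ and stays in the closed set $K_\varepsilon$; but the direct extraction of $\lambda$ above is cleaner and exhibits the dependence $\lambda=\lambda(k,\varepsilon)$ explicitly as $\log k$ minus the maximal value of the entropy over $K_\varepsilon$.
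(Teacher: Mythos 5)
Your proof is correct, and it takes the route the paper intends: the paper gives no explicit argument, stating only that the lemma ``is easy to verify'' from Lemma~\ref{lem:partition-k}, and your compactness argument on the simplex (continuity of $H$, maximum $M<\log k$ on the closed set where the conclusion fails, $\lambda=\log k-M$) is precisely the standard way to make that deduction rigorous. Nothing further is needed.
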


Let $(X,\calB,\mu,T)$ be a measure-theoretic dynamical system.
For a finite partition $\xi$ of $X$,
we define the \emph{sequence entropy} of $T$ with respect to $\xi$ along an increasing sequence $\Gamma=\{t_i\}_{i=1}^\infty$
of non-negative integers  by
\[h^\Gamma_\mu(T,\xi)=\limsup_{N\to\infty}\frac{1}{N} H_\mu\biggl(\bigvee_{i=1}^N T^{-t_i}\xi\biggr).\]
The \emph{sequence entropy} of $T$ along the sequence $\Gamma$ is
\[h^\Gamma_\mu(T)=\sup_\xi h^\Gamma_\mu(T,\xi),\]
where supremum is taken over all finite  partitions.

For a finite partition $\xi$ of $X$ and $k\in\N$, we define
\[p^*_{\mu,\xi}(k)=\sup_{(t_1<t_2<\dotsb<t_k)\in \Z_+^k} H_\mu\biggl(\bigvee_{i=1}^k T^{-t_i}\xi\biggr).\]
We define the \emph{maximal pattern entropy} of $T$ with respect to $\xi$ by
\[h^*_\mu(T,\xi)=\lim_{k\to\infty}\frac{1}{k}p^*_{\mu,\xi}(k).\]
It is easy to see that $\{p^*_{\mu,\xi}(k)\}_{k=1}^\infty$ is a sub-additive sequence, that is
for every $u,v\in\N$
\[p^*_{\mu,\xi}(u+v)\leq p^*_{\mu,\xi}(u)+p^*_{\mu,\xi}(v).\]
Then we have
\[h^*_\mu(T,\xi) =\inf_{k\geq 1} \frac{1}{k}p^*_{\mu,\xi}(k).\]
The \emph{maximal pattern entropy} of $T$ is
\[h^*_\mu(T)=\sup_\xi h^*_\mu(T,\xi),\]
where supremum is taken over all finite partitions.

\begin{thm}[\cite{Huang2009}]
For each finite partition $\xi$ of $X$,
$h^*_\mu(T,\xi)=\sup_\Gamma h^\Gamma_\mu(T,\xi)$.
Moreover, $h^*_\mu(T)=\sup_\Gamma h^\Gamma_\mu(T)$.
\end{thm}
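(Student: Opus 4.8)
The plan is to prove the two inequalities making up the first assertion separately, and then deduce the ``moreover'' part by interchanging suprema. Throughout write $a=h^*_\mu(T,\xi)$, and recall from the excerpt that $a=\inf_{k\geq 1}\tfrac1k p^*_{\mu,\xi}(k)$, so that $p^*_{\mu,\xi}(k)\geq ka$ for every $k$ when $a<\infty$. The easy inequality $\sup_\Gamma h^\Gamma_\mu(T,\xi)\leq a$ comes first: fix any increasing sequence $\Gamma=\{t_i\}$; for each $N$ the tuple $(t_1<\dots<t_N)$ is one competitor in the supremum defining $p^*_{\mu,\xi}(N)$, so $\tfrac1N H_\mu\bigl(\bigvee_{i=1}^N T^{-t_i}\xi\bigr)\leq \tfrac1N p^*_{\mu,\xi}(N)$. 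Taking $\limsup_{N\to\infty}$ and using $\tfrac1N p^*_{\mu,\xi}(N)\to a$ gives $h^\Gamma_\mu(T,\xi)\leq a$.

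The substantive direction is to produce a single $\Gamma$ with $h^\Gamma_\mu(T,\xi)\geq a$. The key tool is a translation invariance: since $T$ is invertible and measure preserving, for any tuple and any shift $s$ one has $\bigvee_{i} T^{-(t_i+s)}\xi=T^{-s}\bigvee_{i}T^{-t_i}\xi$, whence $H_\mu\bigl(\bigvee_i T^{-(t_i+s)}\xi\bigr)=H_\mu\bigl(\bigvee_i T^{-t_i}\xi\bigr)$. Thus any near-optimal block may be slid arbitrarily far to the right without changing its entropy. I would then build $\Gamma$ as an increasing concatenation of such blocks. For each $k$ choose a strictly increasing $k$-tuple with $H_\mu\bigl(\bigvee_{i=1}^k T^{-t_i}\xi\bigr)>p^*_{\mu,\xi}(k)-1\geq ka-1$; pick a rapidly growing sequence $k_1<k_2<\dots$, and place the $m$-th block, after a suitable translation, entirely to the right of all preceding blocks, so that the first $N_m:=k_1+\dots+k_m$ terms of $\Gamma$ are exactly the union of the first $m$ blocks.

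Choosing $k_m$ large enough that $k_m/N_m\to 1$ (e.g.\ $k_m\geq m\,N_{m-1}$), and using monotonicity of entropy under refinement together with the translation invariance above, one gets $H_\mu\bigl(\bigvee_{i=1}^{N_m}T^{-t_i}\xi\bigr)\geq k_m a-1$, since the join over the first $N_m$ terms refines the join over the $m$-th block alone. Hence $\tfrac1{N_m}H_\mu\bigl(\bigvee_{i=1}^{N_m}T^{-t_i}\xi\bigr)\geq \tfrac{k_m a-1}{N_m}\to a$ along the subsequence $N_m$, so $h^\Gamma_\mu(T,\xi)\geq a$. The case $a=\infty$ is handled identically, choosing blocks whose per-element entropy exceeds $m$, which forces $h^\Gamma_\mu(T,\xi)=\infty$.

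Finally, the ``moreover'' statement follows formally from the first part: $h^*_\mu(T)=\sup_\xi h^*_\mu(T,\xi)=\sup_\xi\sup_\Gamma h^\Gamma_\mu(T,\xi)=\sup_\Gamma\sup_\xi h^\Gamma_\mu(T,\xi)=\sup_\Gamma h^\Gamma_\mu(T)$. I expect the main obstacle to be the block construction in the hard direction: specifically, verifying that the newest block dominates the count $N_m$ fast enough that its entropy alone pushes the Ces\`aro-type average up to $a$, and checking that stacking translated blocks keeps $\Gamma$ strictly increasing while the translation invariance preserves each block's entropy.
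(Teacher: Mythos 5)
Your proposal is correct, but note that the paper contains no proof of this statement to compare against: it is imported verbatim from \cite{Huang2009}, and the argument given there is essentially the one you reconstructed. Both directions of your proof check out. The inequality $\sup_\Gamma h^\Gamma_\mu(T,\xi)\le h^*_\mu(T,\xi)$ is exactly as you say: each initial segment of $\Gamma$ competes in the supremum defining $p^*_{\mu,\xi}(N)$, and by subadditivity (Fekete's lemma, which the paper invokes when writing $h^*_\mu(T,\xi)=\inf_k\tfrac1k p^*_{\mu,\xi}(k)$) one has $\tfrac1N p^*_{\mu,\xi}(N)\to h^*_\mu(T,\xi)$. The reverse inequality via your block construction is sound: translation invariance $H_\mu\bigl(\bigvee_i T^{-(t_i+s)}\xi\bigr)=H_\mu\bigl(\bigvee_i T^{-t_i}\xi\bigr)$ holds simply because $T$ is measure-preserving, placing each near-optimal block strictly to the right of its predecessors keeps $\Gamma$ increasing, monotonicity of $H_\mu$ under refinement gives $H_\mu\bigl(\bigvee_{i=1}^{N_m}T^{-t_i}\xi\bigr)\ge k_m a-1$, and the growth condition $k_m\ge m N_{m-1}$ forces $k_m/N_m\to 1$, so the averages along the subsequence $N_m$ tend to $a$; this suffices because $h^\Gamma_\mu$ is defined with a $\limsup$. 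The interchange of suprema in the ``moreover'' part is valid unconditionally. One small correction: the case $a=\infty$ that you set aside is in fact vacuous, since for a finite partition $p^*_{\mu,\xi}(k)\le k H_\mu(\xi)\le k\log\#\xi$, hence $h^*_\mu(T,\xi)\le\log\#\xi<\infty$; only the supremum $h^*_\mu(T)$ over all finite partitions can be infinite, and that case is already covered by your exchange of suprema.
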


We will use the following result, which is implicit in~\cite{Gillis1936},
see also \cite[Proposition 5.8]{Huang2011} for this version.

\begin{lem}\label{lem:sets-in-PS}
Let $(X,\calB,\mu)$ be a probability space and $\{E_i\}_{i=1}^\infty$
be a sequence of measurable sets with $\mu(E_i)\geq a>0$ for some constant $a$ and any $i\in\N$.
Then for any $k\geq 1$ and $\varepsilon>0$ there is $N=N(a,k,\varepsilon)$
such that for any tuple $\{s_1<s_2<\dotsb<s_n\}$ with $n>N$ there exist
$1\leq t_1<t_2<\dotsb<t_k\leq n$ with
\[\mu(E_{s_{t_1}}\cap E_{s_{t_2}}\cap\dotsb\cap E_{s_{t_k}})\geq a^k-\varepsilon.\]
\end{lem}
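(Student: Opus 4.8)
The plan is to derive the conclusion from a single moment estimate, obtained by applying Jensen's inequality to the convex function $t\mapsto t^k$ on the probability space $(X,\calB,\mu)$, rather than inducting on $k$. Given a tuple $\{s_1<s_2<\dotsb<s_n\}$, I would introduce the counting function
\[f=\sum_{j=1}^n \mathbf{1}_{E_{s_j}},\]
which is nonnegative and satisfies $\int_X f\,d\mu=\sum_{j=1}^n\mu(E_{s_j})\geq na$. Since $\mu(X)=1$ and $t\mapsto t^k$ is convex on $[0,\infty)$, Jensen's inequality yields
\[\int_X f^k\,d\mu\geq\Bigl(\int_X f\,d\mu\Bigr)^k\geq n^k a^k.\]

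Next I would expand the power multinomially. Writing $f^k=\bigl(\sum_{j=1}^n \mathbf{1}_{E_{s_j}}\bigr)^k$ and integrating term by term gives
\[\int_X f^k\,d\mu=\sum_{(j_1,\dotsc,j_k)}\mu\bigl(E_{s_{j_1}}\cap\dotsb\cap E_{s_{j_k}}\bigr),\]
the sum ranging over all $n^k$ ordered $k$-tuples of indices in $\{1,\dotsc,n\}$. I split this into the $n(n-1)\dotsb(n-k+1)$ tuples with pairwise distinct entries and the remaining $n^k-n(n-1)\dotsb(n-k+1)$ tuples having a repetition. Bounding each intersection trivially by $1$ on the repeated part, I obtain
\[\sum_{\text{distinct}}\mu\bigl(E_{s_{j_1}}\cap\dotsb\cap E_{s_{j_k}}\bigr)\geq n^k a^k-\bigl(n^k-n(n-1)\dotsb(n-k+1)\bigr).\]
Dividing by the number $n(n-1)\dotsb(n-k+1)$ of distinct tuples, the average of $\mu(E_{s_{j_1}}\cap\dotsb\cap E_{s_{j_k}})$ over distinct ordered $k$-tuples is at least
\[\frac{n^k a^k-\bigl(n^k-n(n-1)\dotsb(n-k+1)\bigr)}{n(n-1)\dotsb(n-k+1)},\]
and since $n^k/\bigl(n(n-1)\dotsb(n-k+1)\bigr)\to 1$ as $n\to\infty$ with $k$ fixed, this quantity tends to $a^k$.

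It follows that once $n$ exceeds a threshold $N$, the displayed average is at least $a^k-\varepsilon$, so some distinct ordered tuple attains a value at least $a^k-\varepsilon$; using the symmetry of intersection I reorder its entries as $1\leq t_1<t_2<\dotsb<t_k\leq n$ to finish. The essential point, and the part I would be most careful about, is the uniformity of $N$: every bound above uses the hypothesis only through $\mu(E_{s_j})\geq a$ and otherwise involves only the purely combinatorial ratio $n^k/\bigl(n(n-1)\dotsb(n-k+1)\bigr)$, so the threshold $N=N(a,k,\varepsilon)$ genuinely does not depend on the tuple $\{s_1,\dotsc,s_n\}$ or on the particular sets. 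The remaining delicate bookkeeping is the separation of distinct from repeated indices and the observation that $n^k-n(n-1)\dotsb(n-k+1)=O(n^{k-1})$, which is what makes the ratio converge to $1$; the analytic input from Jensen's inequality is immediate.
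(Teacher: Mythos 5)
Your proof is correct, but there is nothing in the paper to compare it against in the usual sense: the paper does not prove this lemma at all, it simply quotes it as implicit in Gillis \cite{Gillis1936} and cites \cite[Proposition 5.8]{Huang2011} for this exact formulation. Your moment-counting argument is a valid, self-contained replacement for that citation, and it is essentially the classical argument behind Gillis's theorem. The steps all check out: Jensen's inequality applied to $f=\sum_{j=1}^n\mathbf{1}_{E_{s_j}}$ on the probability space gives $\int_X f^k\,d\mu\geq (na)^k$; the multinomial expansion correctly identifies $\int_X f^k\,d\mu$ with the sum of $\mu(E_{s_{j_1}}\cap\dotsb\cap E_{s_{j_k}})$ over all $n^k$ ordered tuples; and discarding the at most $n^k-n(n-1)\dotsb(n-k+1)$ tuples with a repeated index (each of measure at most $1$) leaves, after averaging over the $n(n-1)\dotsb(n-k+1)$ distinct ordered tuples, the lower bound $a^k+(a^k-1)(r_n-1)$, where $r_n=n^k/\bigl(n(n-1)\dotsb(n-k+1)\bigr)$. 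Since $a\leq 1$ and $r_n\downarrow 1$ as $n\to\infty$ with $r_n$ depending only on $n$ and $k$, the threshold $N$ where this bound exceeds $a^k-\varepsilon$ indeed depends only on $a$, $k$, $\varepsilon$ (one should take $N\geq k$ so that distinct tuples exist, a point worth making explicit), and by symmetry of intersection the extremal ordered tuple can be rearranged as $t_1<t_2<\dotsb<t_k$. The only substantive gain of your write-up over the paper's treatment is self-containedness; conversely, the paper's choice to cite rather than prove is defensible since the lemma is genuinely classical.
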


\begin{prop}\label{prop:logn-sensitive}
Let $(X,\calB,\mu,T)$ be an ergodic system and $n\geq 2$.
If $\xi=\{P_1,\dotsc,P_n\}$ is a partition of $X$ with $n$ atoms and
the maximal pattern entropy of $T$ with respect to $\xi$ is $\log n$,
then $\xi$ is an $n$-sensitive partition.
\end{prop}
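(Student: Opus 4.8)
The plan is to first squeeze out of the entropy hypothesis a rigid uniformity-plus-independence structure for $\xi$, and then to recast the separation of $n$ points as a positive-density recurrence statement in the $n$-fold product system. Since every join $\bigvee_{i=1}^{k}T^{-t_i}\xi$ has at most $n^{k}$ atoms, Lemma~\ref{lem:partition-k} gives $p^*_{\mu,\xi}(k)\le k\log n$, so $\tfrac1k p^*_{\mu,\xi}(k)\le\log n$ for every $k$. Because $h^*_\mu(T,\xi)=\inf_{k}\tfrac1k p^*_{\mu,\xi}(k)=\log n$, every term must already equal $\log n$; that is, $p^*_{\mu,\xi}(k)=k\log n$ for all $k\ge1$. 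Taking $k=1$ gives $H_\mu(\xi)=\log n$, so by Lemma~\ref{lem:partition-k} the atoms are equidistributed, $\mu(P_i)=1/n$ for all $i$. For general $k$, combining $p^*_{\mu,\xi}(k)=\log(n^{k})$ with Lemma~\ref{lem:partition-near-k} applied to the join (which has $n^{k}$ atoms) shows that for every $\varepsilon>0$ there are times $t_1<\dots<t_k$ along which $\xi$ is $\varepsilon$-independent: every atom $\bigcap_j T^{-t_j}P_{i_j}$ of the join has measure within $\varepsilon$ of $n^{-k}=\prod_j\mu(P_{i_j})$.

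Next I would reformulate separation in the product. Work in $X^{n}$ with $T^{(n)}=T\times\dots\times T$, and let $D=\bigcup_{\sigma\in S_n}P_{\sigma(1)}\times\dots\times P_{\sigma(n)}$ be the (disjoint) union of tuples whose coordinates lie in pairwise different atoms. A tuple $\bar x=(x_1,\dots,x_n)$ is separated at time $k$ precisely when $(T^{(n)})^{k}\bar x\in D$, so its separation-time set is $N(\bar x,D)$ in the product system; note that as soon as $N(\bar x,D)\neq\emptyset$ the coordinates are automatically distinct. Thus the proposition reduces to producing, for each $A\in\calB^{+}$ and a constant $\delta$ \emph{not depending on $A$}, a tuple $\bar x\in A^{n}$ with $\underline{D}(N(\bar x,D))>\delta$. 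Since $\mu(P_i)=1/n$, the product measure satisfies $\mu^{n}(D)=n!/n^{n}$, a fixed positive number that will furnish the uniform constant.

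The density of $N(\bar x,D)$ is controlled by the Birkhoff averages of the indicator $1_D$: for $\mu^{n}$-a.e.\ $\bar x$ the limit $g(\bar x)=\lim_{N}\tfrac1N\sum_{k=0}^{N-1}1_D((T^{(n)})^{k}\bar x)$ exists and $\int g\,d\mu^{n}=\mu^{n}(D)=n!/n^{n}$. It therefore suffices to exhibit, inside $A^{n}$, a tuple on which $g$ is bounded below by a fixed $\delta>0$. This is where the $\varepsilon$-independence of the first paragraph enters. Along $t_1<\dots<t_k$ the separation events $(T^{(n)})^{-t_j}D$ are almost independent and each of measure $n!/n^{n}$, so by Lemma~\ref{lem:sets-in-PS} (applied in $(X^{n},\mu^{n})$ to these sets) one can select a large subfamily whose intersection still has measure close to the product value, producing a positive-measure set of tuples that are simultaneously separated along a prescribed block of times. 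Passing to weak-$*$ limits of the normalized empirical measures carried by such blocks yields a $T^{(n)}$-invariant self-joining $\lambda$ with $\lambda(D)\ge\delta:=n!/(2n^{n})$, and a localization argument, again using near-uniformity to guarantee that $A$ carries the requisite initial configurations, keeps $\lambda(A^{n})>0$. Applying the Birkhoff theorem to $1_D$ on the ergodic components of $\lambda$ then delivers a tuple $\bar x\in A^{n}$ with $\underline{D}(N(\bar x,D))=g(\bar x)\ge\delta$.

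I expect the main obstacle to be precisely this last step: converting the finite-scale $\varepsilon$-independence — a statement about finitely many, and possibly sparse, times $t_1<\dots<t_k$ — into a genuine positive \emph{lower} density of separation times for an individual tuple, with a bound $\delta$ independent of $A$, while simultaneously retaining positive mass inside $A^{n}$. The difficulty is sharpest when $A$ lies in a single atom $P_i$, so that separation cannot occur at time $0$ and must be manufactured purely from the recurrence structure; controlling the ergodic-component value of $\lambda(D)$ from below (rather than only its average $n!/n^{n}$) is exactly where the full force of $p^*_{\mu,\xi}(k)=k\log n$ for \emph{all} $k$ — not merely the single-coordinate uniformity $\mu(P_i)=1/n$ — has to be invoked, with Lemma~\ref{lem:sets-in-PS} supplying the near-independent configurations.
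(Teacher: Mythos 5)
Your first paragraph coincides exactly with the paper's opening step: from $p^*_{\mu,\xi}(k)\le k\log n$ and $h^*_\mu(T,\xi)=\inf_{k\ge1}\tfrac1k p^*_{\mu,\xi}(k)=\log n$ you correctly conclude $p^*_{\mu,\xi}(k)=k\log n$ for all $k$, hence equidistribution of the atoms and near-independence of $\xi$ along suitable times. After that, however, your argument has a genuine gap, and it sits exactly where you yourself flag it. Three concrete problems. First, weak-$*$ limits of empirical measures taken along the times selected by Lemma~\ref{lem:sets-in-PS} need not be $T^{(n)}$-invariant: those times form a sparse, unstructured subset of $\Z_+$, not a F{\o}lner sequence of intervals, so the limiting measure has no reason to be invariant (and in a bare Lebesgue space there is not even a canonical topology in which to take such limits). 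Second, the product system $(X^n,\mu^n,T^{(n)})$ is ergodic only when $T$ is weakly mixing, while the hypothesis of the proposition is satisfied by many non--weakly-mixing systems --- indeed the paper applies it, in Proposition~\ref{prop:mpe-logn-sensitive}, to partitions independent of a \emph{nontrivial} Kronecker factor, where already $T\times T$ fails to be ergodic. Consequently the average $\lambda(D)\ge n!/(2n^n)$ says nothing about the value of $D$ on individual ergodic components; a component meeting $A^n$ could assign $D$ measure zero, and Birkhoff along components then gives no lower bound at all. Third, the ``localization argument'' keeping $\lambda(A^n)>0$ is asserted but never constructed, and it is not cosmetic: the sensitivity constant must work for every $A\in\calB^+$, including $A$ contained in a single atom $P_i$, and nothing in your construction ties $\lambda$ back to $A$.

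The missing idea, which is how the paper closes precisely this gap, is to put all $n$ points on a \emph{single} orbit, so that only ergodicity of the original system is ever used. Fix $A\in\calB^+$; apply Lemma~\ref{lem:sets-in-PS} in $(X,\mu)$ to the sets $T^{-t_i}A$ (each of measure $\mu(A)$) to extract times $t_{s_1}<\dotsb<t_{s_n}$ with $\mu\bigl(\bigcap_{i=1}^n T^{-t_{s_i}}A\bigr)>0$; use subadditivity (Lemma~\ref{lem:join-partition}) to transfer the entropy bound from the full family $t_1<\dotsb<t_N$ to this subfamily, so that by Lemma~\ref{lem:partition-near-k} the single atom $W=\bigcap_{i=1}^n T^{-t_{s_i}}P_i$ satisfies $\mu(W)>\tfrac{1}{2n^n}$; then choose $z\in\bigcap_{i=1}^n T^{-t_{s_i}}A$ generic for the Birkhoff theorem with respect to $W$, and set $x_i=T^{t_{s_i}}z\in A$. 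For every $k\in N(z,W)$ the points $T^kx_1,\dotsc,T^kx_n$ lie in the distinct atoms $P_1,\dotsc,P_n$, and the density of $N(z,W)$ equals $\mu(W)>\tfrac{1}{2n^n}$ by ergodicity of $(X,\mu,T)$ itself. Note the interplay your sketch lacks: the \emph{same} times must serve simultaneously for recurrence of $A$ (via Lemma~\ref{lem:sets-in-PS} applied to $T^{-t_i}A$, not to separation events in a product) and for near-independence of $\xi$ (via the subadditivity step); this is what produces a tuple inside $A$ with a uniform positive lower density of separation, with no joinings and no ergodic decomposition needed.
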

\begin{proof}
Since $\xi$ has $n$ atoms, for every $k\in\N$ and $0\leq t_1<t_2<\dotsb<t_k$,
\[H_\mu\biggl(\bigvee_{i=1}^k T^{-t_i}\xi\biggr)\leq
\sum_{i=1}^kH_\mu(T^{-t_i}\xi)=kH_\mu(\xi)\leq k\log n,\]
and
\[p^*_{\mu,\xi}(k)\leq k\log n.\]
The maximal pattern entropy of $T$ with respect to $\xi$ is $\log n$,
that is
\[\inf_{k\geq 1} \frac{1}{k}p^*_{\mu,\xi}(k)=\log n,\]
so $p^*_{\mu,\xi}(k)=k\log n$ for every $k\geq 1$.

Let $\lambda=\lambda(n^n, \frac{1}{2n^n})$ be the constant in Lemma \ref{lem:partition-near-k}.
Fix a measurable set $A\in \calB^+$ and let $N=N(\mu(A),n,\frac{1}{2}\mu(A)^n)$
be the constant in Lemma \ref{lem:sets-in-PS}.
Since $p^*_{\mu,\xi}(N)=N\log n$,  there exist $0\leq t_1<t_2<\dotsb<t_N$ such that
\[H_\mu\biggl(\bigvee_{i=1}^N T^{-t_{i}}\xi\biggr)>N\log n -\lambda.\]
By Lemma \ref{lem:sets-in-PS}, there exist $1\leq s_1<s_2<\dotsb<s_n\leq N$ such that
\[\mu(T^{-t_{s_1}}A\cap T^{-t_{s_2}}A\cap\dotsb\cap T^{-t_{s_n}}A)\geq \mu(A)^n- \frac{1}{2}\mu(A)^n>0.\]
By Lemma \ref{lem:join-partition}, we know that
\[H_\mu\biggl(\bigvee_{i=1}^N T^{-t_{i}}\xi\biggr)
\leq H_\mu\biggl(\bigvee_{j=1}^n T^{-t_{s_j}}\xi\biggr)+
H_\mu\biggl(\bigvee_{j\in H} T^{j}\xi\biggr),\]
where $H=\{t_1,t_2,\dotsc,t_N\}\setminus\{t_{s_1},t_{s_2},\dotsc,t_{s_n}\}$.
Clearly, $H$ has $N-n$ integers and the partition $\bigvee_{j\in H} T^{j}\xi$
has at most non-empty $n^{N-n}$ atoms.
By Lemma~\ref{lem:partition-k}, we have
\begin{align*}
H_\mu\biggl(\bigvee_{j=1}^n T^{-t_{s_j}}\xi\biggr)&\geq H_\mu\biggl(\bigvee_{i=1}^N T^{-t_{i}}\xi\biggr)-
H_\mu\biggl(\bigvee_{j\in H} T^{j}\xi\biggr)\\
 &> (N\log n-\lambda)-(N-n)\log n=n\log n-\lambda.
\end{align*}
Let
\[W=T^{-t_{s_1}}P_1\cap T^{-t_{s_2}}P_2\cap\dotsb\cap T^{-t_{s_n}}P_n.\]
By Lemma~\ref{lem:partition-near-k}, we know that $|\mu(W)-\frac{1}{n^n}|<\frac{1}{2n^n}$
and then $\mu(W)>\frac{1}{2n^n}$.
By the ergodic theorem,
there exists a point $z\in T^{-t_{s_1}}A\cap T^{-t_{s_2}}A\cap\dotsb\cap T^{-t_{s_r}}A$
such that the density of $N(z,W)=\mu(W)$.
Let $x_i=T^{t_{s_i}}z$  for $i=1,2,\dotsc,n$. Then $x_i\in A$ for $i=1,2,\dotsc,n$.
For every $k\in N(z,W)$, $T^kx_i=T^k(T^{t_{s_i}}z)=T^{t_{s_i}}(T^k z)\in T^{t_{s_i}}W\subset P_i$ for $i=1,2,\dotsc,n$,
which implies that $T^kx_1,\dotsc,T^kx_n$ belong different atoms of $\xi$.
Thus $\frac{1}{2n^n}$ is an $n$-sensitive constant for $\xi$.
\end{proof}

For a measure-theoretic dynamical system  $(X,\calB,\mu,T)$, put
\[\calK_\mu=\{A\in\calB\colon h^*_\mu(T,\{A,A^c\})=0.\]
It is a $T$-invariant $\sigma$-algebra of $\calB$.
We say that $\calK_\mu$ is the \emph{Kronecker $\sigma$-algebra} of $(X,\calB,\mu,T)$.
Every $T$-invariant $\sigma$-algebra of $\calB$ determines a factor of $T$ in a natural way.
Let  $\pi\colon (X,\calB,\mu,T) \to (Z,\calK_\mu,\nu,S)$ be the factor map
to the Kronecker factor.
If $T$ is ergodic, then by Rohlin's skew product theorem we may write $T$ as a skew product
$(z,m)\mapsto(S(z),T_z(m))$ on $(Z\times M,\nu\times\rho)$.
Moreover, $(M,\rho)$ consists of $n$ atoms of measure $1/n$ for some positive integer $n$,
or $(M,\rho)$ is continuous (see \cite{Abramov1962}).

One of the key ingredients of the proof of Theorem \ref{thm:main-result}
is the following result by Pickel \cite{Pickel1969} and Walters (unpublished).
\begin{thm}\label{thm:structure}
Let $(X,\calB,\mu,T)$ be an ergodic system,
$\pi\colon (X,\calB,\mu,T) \to (Z,\calK_\mu,\nu,S)$ be the factor map to the Kronecker factor
and write $T$ as a skew product
$(z,m)\mapsto(S(z),T_z(m))$ on $(Z\times M,\nu\times\rho)$. Then
\begin{enumerate}
  \item $h^*_\mu(T)=\log n$ for some positive integer $n$ if and only if
  $(M,\rho)$ consists of $n$ atoms of measure $1/n$;
  \item $h^*_\mu(T)=\infty$ if and only if $(M,\rho)$ is continuous.
\end{enumerate}
\end{thm}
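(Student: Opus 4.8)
The plan is to establish the two forward implications: (i) if $(M,\rho)$ consists of $n$ atoms of measure $1/n$, then $h^*_\mu(T)=\log n$; and (ii) if $(M,\rho)$ is continuous, then $h^*_\mu(T)=\infty$. From these the full statement follows at once, since the two hypotheses are exhaustive and mutually exclusive (this is Abramov's dichotomy recalled above) and so are the two conclusions: if $h^*_\mu(T)=\log n$, then (ii) rules out a continuous fiber, so by (i) the fiber has some number $m$ of equal atoms with $\log m=\log n$, forcing $m=n$; the other direction is identical. Throughout I identify $X$ with the skew product $Z\times M$, so that $\calK_\mu$ coincides (mod $0$) with the algebra of sets depending only on the base coordinate $z$, and I write $\xi$ for a \emph{fiber partition}, namely one whose atoms have the form $Z\times B$.

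Two elementary facts will be used repeatedly. First, by Lemma~\ref{lem:join-partition} the quantity $p^*_{\mu,\cdot}$ is subadditive over joins, hence $h^*_\mu(T,\alpha\vee\beta)\le h^*_\mu(T,\alpha)+h^*_\mu(T,\beta)$; together with the definition of $\calK_\mu$ this gives $h^*_\mu(T,\beta)=0$ for every finite $\calK_\mu$-measurable partition $\beta$ (refine $\beta$ by the two-set partitions $\{B,B^c\}$ of its atoms). Second, the same coordinatewise estimate yields a perturbation bound $h^*_\mu(T,\eta)\le h^*_\mu(T,\zeta)+H_\mu(\eta\,|\,\zeta)$ for any finite partitions $\eta,\zeta$. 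For the upper bound in (i), fix a finite partition $\eta$. Since $M$ is finite, $\calB$ equals (mod $0$) the product of $\calK_\mu$ with the full algebra of $M$, so each atom of $\eta$ is fiberwise a measurable subset of $Z$; hence for every $\varepsilon>0$ there is a finite $\calK_\mu$-measurable partition $\beta$ with $H_\mu(\eta\,|\,\beta\vee\xi)<\varepsilon$, where $\xi$ is the partition into the $n$ fibers $Z\times\{m\}$. The perturbation bound and subadditivity then give
\[
h^*_\mu(T,\eta)\le h^*_\mu(T,\beta\vee\xi)+\varepsilon\le h^*_\mu(T,\beta)+h^*_\mu(T,\xi)+\varepsilon=h^*_\mu(T,\xi)+\varepsilon\le\log n+\varepsilon,
\]
the last step because $\xi$ has $n$ atoms. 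Letting $\varepsilon\to0$ and taking the supremum over $\eta$ yields $h^*_\mu(T)\le\log n$.

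The lower bound in (i) and the whole of (ii) rest on a single construction, which is the heart of the matter: one produces, for the relevant fiber partition, an increasing sequence $\Gamma=(t_i)$ along which its translates are asymptotically independent. In (i) take $\xi$ as above; in (ii), for each fixed $k$ take $\xi_k$ into $k$ fibers $Z\times B_j$ with $\rho(B_j)=1/k$, which exist precisely because $(M,\rho)$ is continuous. The aim is to build $\Gamma$ greedily so that $H_\mu\bigl(T^{-t_{j+1}}\xi\,\big|\,\bigvee_{i\le j}T^{-t_i}\xi\bigr)>\log n-\varepsilon_j$ with $\varepsilon_j\to0$; summing over $j$ via the chain rule for entropy gives $H_\mu(\bigvee_{i=1}^{N}T^{-t_i}\xi)\ge N\log n-o(N)$, so that $h^\Gamma_\mu(T,\xi)=\log n$ and therefore $h^*_\mu(T,\xi)=\sup_\Gamma h^\Gamma_\mu(T,\xi)=\log n$, the reverse inequality being the $n$-atom bound. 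In (ii) the same argument gives $h^*_\mu(T,\xi_k)=\log k$ for every $k$, whence $h^*_\mu(T)=\infty$.

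The step I expect to be the main obstacle is exactly the existence of these near-independent times: given a finite ``past'' partition, one must find a set of later times $t$ of positive density at which the fiber coordinate is both nearly uniform and nearly independent of the past. Its marginal is automatically uniform (as $\mu$ is $T$-invariant and $\rho$ is uniform), so the genuine content is the decorrelation. This is where the hypothesis that $Z$ is the \emph{maximal} null factor is indispensable: it guarantees that the extension $X\to Z$ retains no intermediate almost-periodic structure that would obstruct decorrelation, and one exploits the Halmos--von Neumann description of $S$ as a compact group rotation to control the base returns while extracting the independent fiber times by a measure-concentration argument of the type underlying Lemma~\ref{lem:sets-in-PS}.
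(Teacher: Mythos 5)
Your reduction of the theorem is organized correctly: the upper bound $h^*_\mu(T)\le\log n$ in case (i) via subadditivity, the vanishing of $h^*_\mu(T,\beta)$ for $\calK_\mu$-measurable $\beta$, the perturbation bound, and the bookkeeping that derives both equivalences from the two forward implications plus Abramov's dichotomy are all sound. But there is a genuine gap, and it sits exactly where you flag it: the lower bound. You need that the fiber partition $\xi$ (and, in case (ii), each $\xi_k$), being independent of $\calK_\mu$, admits sequences $\Gamma$ along which its translates are asymptotically independent, i.e.\ $h^*_\mu(T,\xi)=H_\mu(\xi)$. You never prove this; you declare it ``the main obstacle'' and gesture at the Halmos--von Neumann theorem together with ``a measure-concentration argument of the type underlying Lemma~\ref{lem:sets-in-PS}''. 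That step is not a technicality to be deferred: it is the entire mathematical content of the theorem, and it is itself a named result in the literature (Pickel \cite{Pickel1969}; Lemma 3.4 of \cite{Huang2009}, which this paper invokes separately in the proof of Proposition~\ref{prop:mpe-logn-sensitive}). Moreover, the tools you point at do not plausibly deliver it: Lemma~\ref{lem:sets-in-PS} (Gillis) produces tuples of times with \emph{large} intersections of sets of positive measure, i.e.\ positive correlation, which is the opposite of the decorrelation you need; and Halmos--von Neumann describes the base rotation $(Z,\nu,S)$, whereas the required independence lives in the fiber direction, transverse to the base. The standard proofs go through the spectral analysis of functions orthogonal to $L^2(\calK_\mu)$ (Kushnirenko--Pickel, or the combinatorial machinery of \cite{Huang2009}), none of which appears in your sketch.

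For comparison: the paper does not prove Theorem~\ref{thm:structure} at all --- it quotes it as a theorem of Pickel \cite{Pickel1969} and Walters (unpublished) and uses it as a black box. So your attempt, which correctly strips away the routine parts, ends up reducing the statement precisely to its hard core and then stops; as written it is an outline of where a proof would have to go, not a proof. To complete it you would either have to cite the decorrelation statement ($\xi$ independent of $\calK_\mu$ implies $h^*_\mu(T,\xi)=H_\mu(\xi)$) as an external result, as the paper effectively does, or supply the Kushnirenko--Pickel argument in full.
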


We divide the proof of Theorem~\ref{thm:main-result} into two parts,
see Propositions \ref{prop:mpe-logn-sensitive} and \ref{prop:mpe-logn-not-sensitive} below.

\begin{prop}\label{prop:mpe-logn-sensitive}
Let $(X,\calB,\mu,T)$ be an ergodic system and $n\geq 2$.
If the maximal pattern entropy of $T$ is at least $\log n$,
then it is measure-theoretically $n$-sensitive.
\end{prop}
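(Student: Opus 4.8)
The plan is to reduce the statement to the already-proved Proposition~\ref{prop:logn-sensitive}, which produces an $n$-sensitive partition from a single partition $\xi$ with exactly $n$ atoms whose maximal pattern entropy equals $\log n$. So the real task is: given only that $h^*_\mu(T)\geq\log n$, manufacture such a partition. The hypothesis $h^*_\mu(T)=\sup_\eta h^*_\mu(T,\eta)\geq\log n$ gives, for any $\varepsilon>0$, a finite partition $\eta$ with $h^*_\mu(T,\eta)>\log n-\varepsilon$. First I would try to pass from this $\eta$, which may have many atoms and need not achieve entropy exactly $\log n$, to a partition into exactly $n$ atoms realizing $\log n$.

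The key structural input is Theorem~\ref{thm:structure}. The condition $h^*_\mu(T)\geq\log n$ forces, via that theorem, either $h^*_\mu(T)=\log m$ for some integer $m\geq n$ or $h^*_\mu(T)=\infty$; in either case the skew-product representation $(z,m)\mapsto(S(z),T_z(m))$ on $(Z\times M,\nu\times\rho)$ has a fiber space $(M,\rho)$ that either consists of $m\geq n$ atoms of equal measure or is continuous. In both cases I can partition the fiber $M$ into $n$ pieces $M_1,\dotsc,M_n$ each of measure $1/n$: in the atomic case by grouping the $m$ equal atoms into $n$ blocks (possible since one only needs equal-measure blocks, which the grouping achieves when $n\mid m$; more carefully, since $h^*_\mu(T)\ge\log n$ I should take $m\ge n$ and, if $n\nmid m$, instead build the $n$-atom partition directly at the level where entropy $\log n$ is witnessed), and in the continuous case simply by splitting $M$ into $n$ sets of measure $1/n$. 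Pulling this fiberwise partition back through $\pi$ and the skew-product coordinates yields a partition $\xi=\{P_1,\dotsc,P_n\}$ of $X$ with $n$ atoms each of measure $1/n$.

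The decisive step is then to check that this $\xi$ has $h^*_\mu(T,\xi)=\log n$. Because $\xi$ is a partition of the fiber coordinate that is independent of the base $Z=$ Kronecker factor, along a suitable sequence $t_1<\dotsb<t_k$ the translates $T^{-t_i}\xi$ should become mutually independent (this is exactly the mechanism by which the fiber contributes full sequence entropy while the discrete-spectrum base contributes none), so that $H_\mu\bigl(\bigvee_{i=1}^k T^{-t_i}\xi\bigr)=k\log n$ for appropriate tuples, giving $p^*_{\mu,\xi}(k)=k\log n$ and hence $h^*_\mu(T,\xi)=\log n$. I would establish the independence using the skew-product structure: conditioned on the base orbit, the fiber maps $T_z$ act on the equidistributed (or continuous) fiber, and the rigidity/equidistribution along the sequence realizing sequence entropy of the fiber system decouples the coordinates.

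I expect the main obstacle to be precisely this independence/entropy computation, namely verifying that the fiberwise $n$-atom partition attains maximal pattern entropy exactly $\log n$ rather than something smaller. The subtlety is that $h^*_\mu(T,\xi)$ is a $\limsup$ (equivalently an infimum of $\frac1k p^*_{\mu,\xi}(k)$) over all increasing tuples, so I must exhibit tuples making the joins as spread out as possible and simultaneously argue the upper bound $p^*_{\mu,\xi}(k)\le k\log n$ (the latter is immediate from $\xi$ having $n$ atoms, as in the opening lines of the proof of Proposition~\ref{prop:logn-sensitive}). Once $h^*_\mu(T,\xi)=\log n$ is in hand, Proposition~\ref{prop:logn-sensitive} applies verbatim and delivers measure-theoretic $n$-sensitivity, completing the argument.
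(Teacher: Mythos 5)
Your overall strategy is the same as the paper's: invoke Theorem~\ref{thm:structure} to get the skew-product picture over the Kronecker factor, build a fiberwise equal-measure partition, and feed it to Proposition~\ref{prop:logn-sensitive}. However, there is a genuine gap at exactly the step you flag as the ``main obstacle'': you never establish that the fiberwise partition $\xi'=\{Z\times Q_i\}$ satisfies $h^*_\mu(T,\xi')=H_\mu(\xi')=\log n$. Your sketch proposes to prove it by exhibiting tuples $t_1<\dotsb<t_k$ along which the translates $T^{-t_i}\xi'$ become mutually independent, via ``rigidity/equidistribution'' of the fiber maps; this is not carried out, and it is in fact the hard content of the argument. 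The paper does not prove independence along sequences at all: it cites Lemma 3.4 of Huang--Ye \cite{Huang2009}, which says that any finite partition independent of the Kronecker $\sigma$-algebra $\calK_\mu$ has maximal pattern entropy equal to its static entropy. Since $\mu\bigl((Z\times Q_i)\cap \pi^{-1}B\bigr)=\nu(B)\rho(Q_i)$ for every $B\in\calK_\mu$, the partition $\xi'$ is independent of $\calK_\mu$ by construction, and the lemma applies verbatim. Without that lemma (or a genuine proof of your decoupling claim) your argument does not close.

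There is also a cleaner way out of the difficulty you noticed in the atomic case. When $h^*_\mu(T)=\log m$ with $n\nmid m$, grouping the $m$ equal atoms of $M$ into $n$ blocks of measure $1/n$ is indeed impossible (sums of multiples of $1/m$ cannot equal $1/n$), and your fallback --- ``build the $n$-atom partition directly at the level where entropy $\log n$ is witnessed'' --- is too vague to assess. The paper sidesteps the issue entirely: it takes the singleton partition of $M$ with all $m$ atoms, gets that the corresponding partition of $X$ is $m$-sensitive by Proposition~\ref{prop:logn-sensitive} (applied with $m$ in place of $n$), and then uses the trivial observation that an $m$-sensitive partition is automatically $n$-sensitive for every $2\leq n\leq m$, since any $n$ of the $m$ separated points witness $n$-sensitivity along the same time set $F$. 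You should adopt both fixes: cite (or prove) the Huang--Ye independence lemma for the entropy computation, and replace the block-grouping by the ``$m$-sensitive implies $n$-sensitive'' reduction.
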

\begin{proof}
Let $\pi$, $(Z,\calK_\mu,\nu,S)$ and $(Z\times M,\nu\times\rho)$ as in Theorem~\ref{thm:structure}.
As $h^*_\mu(T)\geq\log n$, there are two cases, one is $h^*_\mu(T)=\log m$ for some $m\geq n$,
and the other $h^*_\mu(T)=\infty$.

If $h^*_\mu(T)=\log m$, we know that $(M,\rho)$ consists of $m$ atoms of measure $1/m$.
Let $\xi$ be the partition $\{P_1,\dotsc,P_m\}$ of singletons in $M$.
Then $\rho(P_i)=1/m$ for $i=1,2,\dotsc,m$.
Let $\xi'=\{Z\times P_i\colon i=1,\dotsc,m\}$ be a partition of $X$.
Then $\xi'$ is independent from the Kronecker $\sigma$-algebra $\calK_\mu$, and thus
$h^*_\mu(T,\xi')=H_\mu(\xi')=H_\rho(\xi)=\log m$ (see \cite[Lemma 3.4]{Huang2009}).
By Proposition \ref{prop:logn-sensitive}, $\xi'$ is $m$-sensitive
and then also $n$-sensitive.

If $h^*_\mu(T)=\infty$, we know that $(M,\rho)$ is continuous.
Since $(M,\rho)$ is a Lebesgue space,
there exists a partition $\eta=\{Q_1,\dotsc,Q_n\}$ of $M$ with $\rho(Q_i)=\frac{1}{n}$ for $i=1,\dotsc,n$.
Let $\eta'=\{Z\times Q_i\colon i=1,\dotsc,n\}$ be a partition of $X$.
$\eta'$ is also independent from the Kronecker $\sigma$-algebra $\calK_\mu$,
and thus $h^*_\mu(T,\eta')=\log n$.
By Proposition \ref{prop:logn-sensitive} again, $\eta'$ is $n$-sensitive.
\end{proof}

\begin{cor}
If a non-trivial measure-theoretic dynamical system $(X,\calB,\mu,T)$ is weakly mixing,
then for every $n\geq 2$
it is measure-theoretically $n$-sensitive.
\end{cor}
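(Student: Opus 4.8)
The plan is to deduce the corollary directly from Proposition~\ref{prop:mpe-logn-sensitive}: it suffices to prove that a nontrivial weakly mixing system satisfies $h^*_\mu(T)=\infty$, for then $h^*_\mu(T)\geq\log n$ holds for every integer $n\geq 2$, and Proposition~\ref{prop:mpe-logn-sensitive} yields measure-theoretic $n$-sensitivity for each such $n$. Thus the whole task reduces to a lower bound (in fact an exact computation) for the maximal pattern entropy of a weakly mixing system, and no new sensitivity argument is needed.

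To carry this out I would invoke the skew-product description recorded before Theorem~\ref{thm:structure}. Note first that weak mixing implies ergodicity, so Theorem~\ref{thm:structure} applies. Write $T$ as a skew product $(z,m)\mapsto(S(z),T_z(m))$ over its Kronecker factor $(Z,\calK_\mu,\nu,S)$ with fibre $(M,\rho)$, where $(M,\rho)$ either consists of $n$ atoms of measure $1/n$ for some positive integer $n$, or is continuous. The key input is the well-known fact that weak mixing is equivalent to the triviality of the maximal discrete-spectrum factor; since $\calK_\mu$ is precisely the Kronecker factor, $(Z,\calK_\mu,\nu,S)$ reduces to a one-point system, and hence $(X,\calB,\mu,T)$ is isomorphic to the fibre system $(M,\rho)$ equipped with its transformation.

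It then remains to place the fibre in the dichotomy of Theorem~\ref{thm:structure}. If $(M,\rho)$ consisted of $n$ atoms of equal measure with $n\geq 2$, then, the base being a single point, the whole system would be an ergodic transformation of $n\geq 2$ points, that is, a cyclic rotation; such a system has nontrivial eigenvalues and so is not weakly mixing, a contradiction. If instead $n=1$, the system would coincide with its trivial Kronecker factor, contradicting nontriviality. Therefore $(M,\rho)$ must be continuous, and Theorem~\ref{thm:structure}(2) gives $h^*_\mu(T)=\infty$, which is exactly the bound needed to invoke Proposition~\ref{prop:mpe-logn-sensitive}.

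The step I expect to be the main obstacle is the clean identification of $\calK_\mu$ with the Kronecker factor and the resulting triviality of the base for weakly mixing systems; once the base is known to be a single point, excluding the finite-fibre case is routine, since an ergodic transformation of finitely many points is a rotation and hence never weakly mixing unless it is trivial.
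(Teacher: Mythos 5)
Your proof is correct, and it reaches the paper's conclusion by a slightly different mechanism. Both you and the paper reduce the corollary to showing $h^*_\mu(T)=\infty$ and then invoke Proposition~\ref{prop:mpe-logn-sensitive}, and both rest on the same key input, namely that weak mixing forces the Kronecker $\sigma$-algebra $\calK_\mu$ to be trivial. Where you diverge is the middle step: the paper cites Pickel's formula that triviality of $\calK_\mu$ gives $h^*_\mu(T,\xi)=H_\mu(\xi)$ for every finite partition $\xi$, and then concludes $h^*_\mu(T)=\sup_\xi H_\mu(\xi)=\infty$ from non-triviality; you instead go through the skew-product structure theorem (Theorem~\ref{thm:structure}), observe that a trivial base identifies $(X,\mu,T)$ with the fibre $(M,\rho)$, and rule out the finite-fibre alternatives of the Abramov--Rohlin dichotomy (an ergodic system on $n\geq 2$ equal atoms is a cyclic rotation, hence not weakly mixing; $n=1$ contradicts non-triviality), leaving $(M,\rho)$ continuous and so $h^*_\mu(T)=\infty$ by Theorem~\ref{thm:structure}(2). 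Your route is marginally longer but has the merit of making explicit a point the paper glosses over: to pass from ``non-trivial'' to ``$h^*_\mu(T)=\infty$'' via $\sup_\xi H_\mu(\xi)$, one needs partitions of arbitrarily large entropy, i.e.\ non-atomicity of $\mu$, which for a non-trivial weakly mixing system holds by essentially the finite-rotation exclusion you carry out; the paper's route, in exchange, is shorter and avoids any appeal to the skew-product representation. Both arguments ultimately lean on the identification of $\calK_\mu$ with the classical maximal discrete-spectrum factor, which you correctly flag as the one nonstandard input.
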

\begin{proof}
Since $(X,\calB,\mu,T)$ is weakly mixing,
the Kronecker $\sigma$-algebra is trivial.
Then $h^*_\mu(T,\xi) =H(\xi)$ for every partition $\xi$ \cite{Pickel1969}.
So its maximal pattern entropy is infinite, as $(X,\calB,\mu,T)$ is non-trivial.
Now the result follows from Proposition~\ref{prop:mpe-logn-sensitive}.
\end{proof}

\begin{cor}
If an ergodic system $(X,\calB,\mu,T)$ has positive entropy,
then for every $n\geq 2$
it is measure-theoretically $n$-sensitive.
\end{cor}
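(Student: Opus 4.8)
The plan is to deduce this corollary directly from Proposition~\ref{prop:mpe-logn-sensitive}. That proposition shows that whenever $h^*_\mu(T)\geq\log n$ the system is measure-theoretically $n$-sensitive; since $\infty\geq\log n$ for every $n\geq 2$, it suffices to prove that an ergodic system of positive measure-theoretic entropy has infinite maximal pattern entropy, that is, $h^*_\mu(T)=\infty$.

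To establish $h^*_\mu(T)=\infty$, I would argue by contradiction using the Pickel--Walters dichotomy together with the structural Theorem~\ref{thm:structure}. For an ergodic system $h^*_\mu(T)$ is either $\log n$ for some positive integer $n$ (the value $n=1$ corresponding to $h^*_\mu(T)=0$) or $\infty$. Suppose, for contradiction, that $h^*_\mu(T)=\log n<\infty$. Writing $T$ as the skew product $(z,m)\mapsto(S(z),T_z(m))$ over its Kronecker factor $(Z,\calK_\mu,\nu,S)$ as in Theorem~\ref{thm:structure}, the finiteness of $h^*_\mu(T)$ forces the fibre space $(M,\rho)$ to consist of exactly $n$ atoms of measure $1/n$. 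In other words, $(X,\calB,\mu,T)$ is a \emph{finite extension} of its Kronecker factor.

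The remaining, and main, step is to show that such a finite extension has zero Kolmogorov--Sinai entropy, which contradicts the hypothesis $h_\mu(T)>0$. First, the Kronecker factor $(Z,\calK_\mu,\nu,S)$ has discrete spectrum and hence zero entropy, so by the Abramov--Rokhlin addition formula $h_\mu(T)$ equals the relative entropy of the extension over $\mathcal{F}=\pi^{-1}(\calK_\mu)$. Let $\xi'=\{Z\times\{m\}\colon m\in M\}$ be the fibre partition, which together with $\mathcal{F}$ generates $\calB$ and is therefore a relative generator. Because each fibre map $T_z$ merely permutes the $n$ points of $M$, the $M$-coordinate of $T^i(z,m)$ is determined by $z$ and the $M$-coordinate of $(z,m)$; hence the join $\bigvee_{i=0}^{N-1}T^{-i}\xi'\vee\mathcal{F}$ equals $\xi'\vee\mathcal{F}$, so that $H_\mu\bigl(\bigvee_{i=0}^{N-1}T^{-i}\xi'\mid\mathcal{F}\bigr)\leq\log n$ is bounded in $N$. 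Consequently the relative entropy $h_\mu(T,\xi'\mid\mathcal{F})=\lim_{N\to\infty}\frac1N H_\mu\bigl(\bigvee_{i=0}^{N-1}T^{-i}\xi'\mid\mathcal{F}\bigr)$ vanishes, whence $h_\mu(T)=0$. This contradicts positive entropy, so $h^*_\mu(T)=\infty$ and the corollary follows from Proposition~\ref{prop:mpe-logn-sensitive}.

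The crux of the argument is this last paragraph: translating the finiteness of the maximal pattern entropy, via Theorem~\ref{thm:structure}, into the concrete fact that a finite permutation extension of a zero-entropy base is again of zero entropy. An alternative route avoiding the explicit relative-entropy computation would invoke Sinai's factor theorem: a positive-entropy ergodic system admits a non-trivial Bernoulli factor, which is weakly mixing, and since the maximal pattern entropy of a factor does not exceed that of the ambient system, the preceding corollary on weakly mixing systems already yields $h^*_\mu(T)=\infty$. I would present the structural argument as the main proof, as it relies only on results stated earlier in the paper.
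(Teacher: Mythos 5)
Your proposal is correct, but it takes a genuinely different route from the paper. The paper disposes of this corollary in one line: it cites the result of Huang and Ye \cite{Huang2009} that an ergodic system with positive entropy has infinite maximal pattern entropy, and then invokes Proposition~\ref{prop:mpe-logn-sensitive}. You instead \emph{prove} that entropy fact, by combining the Pickel--Walters dichotomy (Theorem~\ref{thm:structure}) with the Abramov--Rokhlin addition formula: if $h^*_\mu(T)=\log n<\infty$, the system is a finite permutation extension of its Kronecker factor, the fibre partition $\xi'$ is a relative generator whose conditional joins $\bigvee_{i=0}^{N-1}T^{-i}\xi'\vee\mathcal{F}$ stabilize at $\xi'\vee\mathcal{F}$, so the relative entropy over the (zero-entropy, discrete-spectrum) base vanishes and $h_\mu(T)=0$, a contradiction. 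This argument is sound; the only caveat is that it imports classical machinery not developed in the paper (the Abramov--Rokhlin skew-product entropy formula and the relative Kolmogorov--Sinai/generator theorem), whereas the paper's appeal to \cite{Huang2009} is a black-box citation of exactly the fact you reprove. What your approach buys is self-containedness relative to the results actually stated in the paper: everything reduces to Theorem~\ref{thm:structure} plus standard entropy theory, and it makes transparent \emph{why} finite maximal pattern entropy is incompatible with positive Kolmogorov--Sinai entropy. Your alternative sketch via Sinai's factor theorem and the weak mixing corollary is also valid, though it trades one citation for a deeper one.
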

\begin{proof}
It follows from the result that if the entropy of an ergodic system is positive,
then its maximal pattern entropy is infinite \cite{Huang2009}.
Now the result follows from Proposition~\ref{prop:mpe-logn-sensitive}.
\end{proof}

%

The following result can be regarded  as the opposite to Proposition~\ref{prop:mpe-logn-sensitive}.
\begin{prop} \label{prop:mpe-logn-not-sensitive}
If an ergodic  system is $(X,\calB,\mu,T)$ is $n$-sensitive for some $n\geq 2$,
then  the maximal pattern entropy of $T$ is at least $\log n$.
\end{prop}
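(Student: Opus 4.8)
The plan is to prove the contrapositive: assuming $h^*_\mu(T)<\log n$, I will show that $(X,\calB,\mu,T)$ is not $n$-sensitive. Since the system is ergodic, the Pickel--Walters dichotomy recorded in Theorem~\ref{thm:structure} forces $h^*_\mu(T)=\log m$ for some integer $m$ with $1\le m\le n-1$; in particular $n\ge m+1$. By that theorem the skew product $(z,w)\mapsto(S(z),T_z(w))$ on $(Z\times M,\nu\times\rho)$ has a fiber $(M,\rho)$ consisting of exactly $m$ atoms. The base $(Z,\calK_\mu,\nu,S)$ is the Kronecker factor, hence ergodic with discrete spectrum, so by the Halmos--von Neumann theorem I may realize it as a rotation $z\mapsto az$ on a compact metrizable abelian group $G$ carrying Haar measure and a translation-invariant metric $\rho$, exactly as in Corollary~\ref{cor:discrete-spectrum-non-sensitive}. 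Writing $M=\{1,\dots,m\}$ with the uniform measure, I realize $(X,\mu,T)$ as $G\times M$ with $T(g,i)=(ag,\tau_g(i))$, where $\tau_g$ is a measurable $\mathrm{Sym}(m)$-valued cocycle, and I equip $X$ with the metric $d\bigl((g,i),(g',i')\bigr)=\rho(g,g')+\mathbf 1_{\{i\ne i'\}}$. This $d$ is compact and its Borel $\sigma$-algebra generates $\calB$ mod $0$, so by Proposition~\ref{prop:invariant} and Theorem~\ref{thm:metic-n-sensitive} it suffices to refute, for this model, the mean metric condition: I must produce, for every $\varepsilon>0$, a set $A\in\calB^+$ all of whose $n$-tuples of distinct points have Ces\`aro liminf of $\min_{i<j}d(T^kx_i,T^kx_j)$ at most $\varepsilon$.

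The key is a pigeonhole observation that bypasses any control of the cocycle. Fix $\varepsilon>0$, let $B\subset G$ be a ball of radius $\varepsilon/2$, and set $A=B\times M$, so $\mu(A)>0$. Let $x_1,\dots,x_n\in A$ be distinct, write $x_j=(g_j,i_j)$ with $g_j\in B$, and note $T^kx_j=(a^kg_j,\tau^{(k)}_{g_j}(i_j))$. At each time $k$ the fiber coordinates $\tau^{(k)}_{g_1}(i_1),\dots,\tau^{(k)}_{g_n}(i_n)$ lie in the $m$-element set $M$; since $n\ge m+1$, two of them coincide, say those of indices $a=a(k)$ and $b=b(k)$. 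For this pair the fiber contribution to $d$ vanishes, and since $\rho$ is translation invariant,
\[
\min_{1\le i<j\le n}d(T^kx_i,T^kx_j)\le d(T^kx_a,T^kx_b)=\rho(a^kg_a,a^kg_b)=\rho(g_a,g_b)<\varepsilon .
\]
As this holds for every $k$, the Ces\`aro averages are $<\varepsilon$ for all $N$, whence the liminf is at most $\varepsilon$. Thus no $\varepsilon$ can serve as a mean-sensitivity constant, the model is not $n$-sensitive, and therefore neither is $(X,\calB,\mu,T)$, which is the desired contradiction.

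The genuine obstacle I expect is exactly the point the pigeonhole finesses: the skew-product cocycle $\tau_g$ is only measurable and may be arbitrarily wild, so there is no hope of tracking the fiber coordinate of a prescribed orbit, and a direct attempt to force two fixed points into the same atom along a large-density time set would fail. What saves the argument is that one never needs to know \emph{which} coordinates collide---only that, with strictly more points than fiber atoms, some pair must collide at every instant, and collision in the fiber together with the isometric action on the base pins those two orbit points within the (arbitrarily small) diameter of $B$. The only routine verifications left are that the product metric $d$ is compact and generates $\calB$ modulo null sets, and the bookkeeping of the entropy dichotomy that turns $h^*_\mu(T)<\log n$ into $h^*_\mu(T)=\log m$ with $n\ge m+1$; the case $m=1$ (so $h^*_\mu(T)=0$) is included and simply recovers Corollary~\ref{cor:discrete-spectrum-non-sensitive}.
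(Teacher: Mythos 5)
Your proof is correct, but it takes a genuinely different route from the paper's. Both arguments start from the same structural input --- Theorem~\ref{thm:structure} together with Rohlin's skew-product representation, which turn $h^*_\mu(T)<\log n$ into a model over the Kronecker factor with $m<n$ fiber atoms --- and both exploit the pigeonhole fact that $n$ points cannot sit injectively in $m$ fiber atoms. But the paper argues by contradiction \emph{downward}: from an $n$-sensitive partition of $X$ it manufactures a product partition $\zeta=\{R\times Q\}$ refining it, uses the pigeonhole at each time $k\in F$ to find some pair of base points separated by $\eta$, and then splits $F$ among the $\binom{n}{2}$ pairs to conclude that the Kronecker factor $(Z,\calK_\mu,\nu,S)$ is itself sensitive, contradicting Corollary~\ref{cor:discrete-spectrum-non-sensitive}. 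You instead go \emph{upward}: you realize the whole system as a compact metric model $G\times M$ (rotation base with invariant metric, permutation cocycle, discrete fiber metric) and refute the metric characterization of Theorem~\ref{thm:metic-n-sensitive} directly on the full system, never descending to the factor. This buys two genuine simplifications: you avoid the paper's partition bookkeeping (producing $\eta$ with $\eta\times\xi$ refining the sensitive partition $\xi''$), and, more substantively, you avoid the density-splitting step $\underline{D}(F')>\delta/n^2$, which suffers from exactly the subadditivity defect the paper flags in the Remark after Proposition~\ref{prop:pairs-sensitvity} (a finite union of positive lower density only yields one piece of positive \emph{upper} density, so a Birkhoff-type repair is silently needed in the paper's case $n>2$ as well); your pigeonhole is applied pointwise in $k$ and never requires a single fixed pair to collide along a positive-density set of times. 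You also treat $n=2$ (i.e.\ $m=1$) uniformly, where the paper needs a separate first case via Kushnirenko's theorem. What the paper's route buys in exchange is that it stays entirely within the partition framework and reuses Corollary~\ref{cor:discrete-spectrum-non-sensitive} as a black box, whereas a complete write-up of your argument must include the (routine, as you note) verifications that the product metric is compact, that its completed Borel $\sigma$-algebra is $\calB$ mod $0$ so that Theorem~\ref{thm:metic-n-sensitive} and Proposition~\ref{prop:invariant} apply to the model, and that Haar balls in $G$ have positive measure.
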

\begin{proof}
We first consider the case $n=2$.
If the maximal pattern entropy of $T$ is less than $\log 2$, then it is zero.
Then by Kusnirenko's result in~\cite{Kusnirenko1967} $(X,\calB,\mu,T)$ has a discrete spectrum.
By Corollary \ref{cor:discrete-spectrum-non-sensitive},
$(X,\calB,\mu,T)$ is not measure-theoretically sensitive.
This is a contradiction. So the maximal pattern entropy of $T$ is at least $\log 2$.

Now assume that $n>2$. By the first case we know that
the maximal pattern entropy of $T$ is at least $\log 2$.
Assume that the maximal pattern entropy of $T$ is  $\log m$ for some $2\leq m<n$.
Let $\pi$, $(Z,\calK_\mu,\nu,S)$ and $(Z\times M,\nu\times\rho)$ as in Theorem~\ref{thm:structure}.
Since $h^*_\mu(T)=\log m$, we know that $(M,\rho)$ consists of $m$ atoms of measure $1/m$.
Let $\xi$ be the partition $\{Q_1,\dotsc,Q_{m}\}$ of singletons in $M$,
that is with $\rho(Q_i)=1/m$.
Let $\xi'=\{Z\times Q_i\colon i=1,\dotsc,m\}$ be a partition of $X$.
As $(X,\mu,T)$ is $n$-sensitive,
there exists an $n$-sensitive partition $\xi''$ refining $\xi'$.
By the construction of $\xi'$, there exists a finite partition $\eta$ of $Z$ such that
the partition $\zeta=\{R\times Q\colon R\in\eta, Q\in\xi\}$ refining $\xi''$.
Moreover, $\zeta$ is also an $n$-sensitive partition of $X$.

We claim that $\eta$ is a sensitive partition of $(Z,\calK_\mu,\nu,S)$.
Fix $A\in\calK_\mu^+$. Then $A\times Q_1\in\calB^+$ and
there exist $n$ distinct points $x_1,\dotsc,x_{n}$ in $A\times Q_1$
and a subset $F$ of $\Z_+$ with $\underline{D}(F)>\delta$ such that
for every $k\in F$, $T^{k}x_1,\dotsc,T^k x_{n}$ belong to different atoms of $\zeta$.
Express $x_1=(z_1,q_1),\dotsc,x_{n}=(z_{n},q_1)$.
It is clear that $z_1,\dotsc,z_{n}$ are pairwise distinct and belong to $A$.
Note that $\xi$ only has $m$ atoms.
By the pigeonhole principle, for every $k\in F$, there exist $z_{k,1}$ and $z_{k,2}$ such that
$S^k z_{k,1}$ and $S^k z_{k,2}$ belong to different atoms of $\eta$.
So there exists two distinct points $z_{i_1}$ and $z_{i_2}$ in the list $\{z_1,\dotsc,z_{n}\}$
such that there exists a subset $F'$ of $F$ with $\underline{D}(F')>\delta/n^2$
satisfying for every $k\in F'$, $S^k z_{i_1}$ and $S^k z_{i_2}$ belong different atoms of $\eta$,
which implies that $\eta$ is a sensitive partition of $(Z,\nu,S)$.

As $(Z,\calK_\mu,\nu,S)$ is the Kronecker factor of $(X,\calB,\mu,T)$,
the maximal pattern entropy of $(Z,\nu,S)$ is zero.
Then by the proof of the first case, $(Z,\calK_\mu,\nu,S)$ is not measure-theoretically sensitive.
This is a contradiction. So the maximal pattern entropy of $T$ is at least $\log n$.
\end{proof}

Now it is clear that Theorem~\ref{thm:main-result}
follows from Propositions \ref{prop:mpe-logn-sensitive} and \ref{prop:mpe-logn-not-sensitive}.

\section{An alternative definition of measure-theoretic sensitivity}

In this section, we define the measure-theoretic weak sensitivity
using the upper density of subsets of $\Z_+$ instead of the lower density.
It turns out that it is equivalent to measure-theoretic sensitivity for ergodic systems.

\begin{defn}
Let $(X,\calB,\mu,T)$ be a  measure-theoretic dynamical system and $n\geq 2$.
We say that a finite partition $\xi$ of $X$ is \emph{weakly $n$-sensitive} if
there exists $\delta>0$ such that for every $A\in\calB^+$
there exist $n$ distinct points $x_1,x_2,\dotsc,x_n\in A$ and
a subset $F$ of $\N$ with $\overline{D}(F)\geq\delta$
such that for every $k\in F$, $T^{k}x_1,T^{k}x_2,\dotsc,T^kx_n$ belong to different atoms of $\xi$.

The  dynamical system $(X,\calB,\mu,T)$ is called \emph{measure-theoretically weakly $n$-sensitive} if
there exists a weakly $n$-sensitive partition $\xi$ of $X$.
\end{defn}

It is routine to check that all the results in the previous two sections also hold for
measure-theoretic weak $n$-sensitivity, just replacing
the lower density of the subset $F$ of $\Z_+$ by the upper density
and the limit infimum by limit supremum.
Especially, similar result of Theorem~\ref{thm:main-result}
also holds for measure-theoretic weak $n$-sensitivity.
So we get that those two definitions of measure-theoretic
sensitivity are equivalent for ergodic systems.
More specifically, we have the following result.

\begin{thm}\label{thm:weakly}
Let $(X,\calB,\mu,T)$ be an ergodic system and $n\geq 2$.
Then $(X,\calB,\mu,T)$ is measure-theoretically $n$-sensitive
if and only if it is measure-theoretically weakly $n$-sensitive.
\end{thm}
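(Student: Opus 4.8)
The plan is to prove the equivalence by establishing both implications, and the key observation is that one direction is essentially immediate from the definitions while the other requires the machinery developed for the lower-density version. First I would note that any $n$-sensitive partition (in the lower-density sense) is automatically weakly $n$-sensitive, since $\overline{D}(F)\geq\underline{D}(F)$ for every subset $F$ of $\Z_+$. Thus the same partition $\xi$ with the same sensitive constant $\delta$ witnesses weak $n$-sensitivity, and the forward implication requires no extra work.

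For the reverse implication, the cleanest route is to invoke the parenthetical remark the paper already makes: all the results of Sections 2 and 3 go through verbatim for the weak version after replacing lower density by upper density and $\liminf$ by $\limsup$. In particular, the weak analogue of Proposition~\ref{prop:mpe-logn-not-sensitive} shows that if $(X,\calB,\mu,T)$ is measure-theoretically weakly $n$-sensitive, then $h^*_\mu(T)\geq\log n$. I would then feed this directly into Proposition~\ref{prop:mpe-logn-sensitive}, which states that $h^*_\mu(T)\geq\log n$ implies (ordinary, lower-density) measure-theoretic $n$-sensitivity. Composing these two facts closes the loop: weak $n$-sensitivity forces the maximal pattern entropy to be at least $\log n$, which in turn forces genuine $n$-sensitivity.

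I expect the main subtlety to lie in verifying that the weak analogue of Proposition~\ref{prop:mpe-logn-not-sensitive} really does hold, i.e.\ that the whole chain of results leading to it survives the passage from lower to upper density. The delicate point is that upper density is not subadditive in the convenient way lower density is: in the proof of Proposition~\ref{prop:pairs-sensitvity} one splits a set $F$ with good lower density into finitely many pieces and extracts one piece still having good lower density, an argument that fails directly for upper density. However, the Remark following that proposition already handles exactly this issue by appealing to the Birkhoff ergodic theorem to upgrade $\overline{D}$ back to a genuine density; the same device repairs the upper-density version, so the pigeonhole-type splitting in the weak analogue of Proposition~\ref{prop:mpe-logn-not-sensitive} goes through. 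Once one is satisfied that each intermediate lemma and proposition admits its upper-density counterpart (as the paper asserts it is routine to check), the theorem follows, and I would present the argument succinctly as the composition of the forward triviality with the two-step entropy chain described above.
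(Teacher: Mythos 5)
Your proposal is correct and is essentially the paper's own argument: the paper proves Theorem~\ref{thm:weakly} precisely by remarking that all results of Sections 2 and 3 (in particular the analogue of Theorem~\ref{thm:main-result}) carry over to the upper-density setting, so that both notions of sensitivity are characterized by the same condition $h^*_\mu(T)\geq\log n$. Your packaging is slightly more economical than the paper's: the forward implication is free since $\overline{D}(F)\geq\underline{D}(F)$, so only the weak analogue of Proposition~\ref{prop:mpe-logn-not-sensitive} together with the original Proposition~\ref{prop:mpe-logn-sensitive} is needed, rather than the full weak analogue of Theorem~\ref{thm:main-result}.

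One correction to your discussion of where the subtlety lies: you have it exactly backwards. Upper density \emph{is} subadditive, $\overline{D}(F_1\cup F_2)\leq\overline{D}(F_1)+\overline{D}(F_2)$, so the pigeonhole splittings in Proposition~\ref{prop:pairs-sensitvity} and in the proof of Proposition~\ref{prop:mpe-logn-not-sensitive} go through \emph{directly} in the weak (upper-density) setting, with no repair needed. It is the lower-density version where splitting fails: from $\underline{D}(F)\geq\lambda$ and $F=F_1\cup F_2$ one can only extract a piece of large \emph{upper} density, and that is precisely the defect that the paper's Remark following Proposition~\ref{prop:pairs-sensitvity} repairs via the Birkhoff ergodic theorem. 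So the routine verification of the weak analogues is, if anything, easier than in the original setting; your appeal to the Birkhoff device there is harmless but misdiagnoses the delicate point. This does not affect the validity of your proof, since the conclusion you actually need --- that the weak analogue of Proposition~\ref{prop:mpe-logn-not-sensitive} holds --- is true.
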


\begin{rem}
It should be noticed that the author in \cite{Garcia-Ramos2014}
also introduced the conception of measurable sensitivity.
By Theorems~\ref{thm:main-result} and \ref{thm:weakly}, and Theorem 41 in \cite{Garcia-Ramos2014},
for ergodic systems the measurable sensitivity in Definition 40 of~\cite{Garcia-Ramos2014}
is equivalent to measure-theoretical $n$-sensitivity for the case $n=2$.
\end{rem}

\section*{Acknowledgement}
The author was supported in part by NSF of China (grant numbers 11401362 and 11471125).
The author would like to thank Siming Tu and Ruifeng Zhang for the careful reading and helpful comments.
Part of this work was done during the visit of the Chinese University of Hong Kong
in the summers of 2015. The author is grateful to professor De-Jun Feng for his warm hospitality.
The authors would also like to thank the anonymous referees for
their helpful suggestions concerning this paper.

\bibliographystyle{amsplain}

\providecommand{\bysame}{\leavevmode\hbox to3em{\hrulefill}\thinspace}
\providecommand{\MR}{\relax\ifhmode\unskip\space\fi MR }
\providecommand{\MRhref}[2]{%
  \href{http://www.ams.org/mathscinet-getitem?mr=#1}{#2}
}
\providecommand{\href}[2]{#2}

\end{document}